\long\def\comment#1{}
\newtheorem{theorem}{Theorem}
\newtheorem{prop}[theorem]{Proposition}
\newtheorem{lemma}[theorem]{Lemma}
\theoremstyle{definition}
\theoremstyle{remark}
\newlist{enum}{enumerate}{10}
\setlist[enum]{itemsep=0ex, topsep=1ex, leftmargin=4ex, label={\arabic*})}
\def\as{a.s\@ifnextchar.{}{.\xspace}}
\def\iid{i.i.d\@ifnextchar.{}{.\xspace}}
\def\Erdos{Erd\H{o}s\xspace}
\def\Renyi{R\'enyi\xspace}
\def\ai{\varepsilon}  
\def\Cal#1{\mathcal{#1}}
\def\Cbr#1{\left\{#1\right\}}
\def\cf#1{\mathbf{1}\!\Cbr{#1}}
\def\cnt{\mathop{n}}
\def\cov{\mathop{\rm Cov}}
\def\dbern{\mathrm{Bern}}
\def\dpois{\mathrm{Po}}
\def\drg{\mathrm{G}}
\def\Empty{\emptyset}
\def\eno#1#2{#1_1, \ldots, #1_{#2}}             
\def\esd#1{\mu_{#1}}
\def\ess{\mathop{\rm ess}}
\def\ev{\lambda}
\def\Grp#1{\left(#1\right)}
\def\gv{\,|\,}
\def\Iff{\Longleftrightarrow}   
\def\ismat#1{\in\Cal M_#1}
\def\lfrac#1#2{#1/#2}
\def\mean{\mathop{\rm E}}
\def\mnt{\beta}
\def\pr{\mathop{\rm P}}
\def\Reals{\mathbb{R}}
\def\Sbr#1{\left[#1\right]}
\def\sev{\ev_\star}       
\def\sr{\varrho}  
\def\th{{\rm th}}
\def\toi{\to\infty}
\def\tr{\mathop{\rm tr}}
\def\var{\mathop{\rm Var}}
\def\vf#1{\boldsymbol{#1}}
\begin{document}
\begin{center}
  \textbf{
    Random reversible Markov matrices with tunable extremal
    eigenvalues
  } \\[1ex]
  \normalsize
  Zhiyi Chi \\
  Department of Statistics, University of Connecticut \\
  215 Glenbrook Road, U-4120, Storrs, CT 06269, USA\\
  Email: zhiyi.chi@uconn.edu.
  \\[.8ex]
  \today
\end{center}

\begin{abstract}
  Random sampling of large Markov matrices with a tunable spectral
  gap, a nonuniform stationary distribution, and a nondegenerate
  limiting empirical spectral distribution (ESD) is useful.  Fix $c>0$
  and $p>0$.  Let $A_n$ be the adjacency matrix of a random graph
  following $\mathrm{G}(n, p/n)$, known as the \Erdos-\Renyi
  distribution.  Add $c/n$ to each entry of $A_n$ and then normalize
  its rows.  It is shown that the resulting Markov matrix has the
  desired properties.  Its ESD weakly converges in probability to a
  symmetric nondegenerate distribution, and its extremal eigenvalues,
  other than 1, fall in $[-1/\sqrt{1+c/k},-b]\cup [b,1/\sqrt{1+c/k}]$
  for any $0<b<1/\sqrt{1+c}$, where $k = \lfloor p \rfloor + 1$.
  Thus, for $p\in (0,1)$, the spectral gap tends to $1-1/\sqrt{1+c}$.

  \medbreak\noindent
  {\em Mathematics Subject Classification (2010).\/} 60B20, 05C80.

  \medbreak\noindent
  {\em Key words and phrases.\/} Random matrix, random graph, Markov
  matrix, reversible.
\end{abstract}

\section{Introduction} \label{s:intro}
The spectral properties of random Markov matrices have received
increasing attention over the years \cite {tran:13:rsa,
  bordenave:11:ap, bordenave:10:alea, nguyen:14:ap, bordenave:12:ptrf,
  chatterjee:15:aihp}.  In applications, it is useful to randomly
sample a large Markov matrix, such that the mixing rate of the
associated Markov chain is controllable.  The chain can be used,
for example, to evaluate the performance of a data analytic procedure
under various strengths of statistical dependency within data
\cite {rayaprolu:14:arXiv}.  By the well known connection between
mixing rate and eigenvalues of Markov matrix \cite {rosenthal:95:siam,
  chung:97:ams}, the issue may be cast as how to sample large Markov
matrices with a specified spectral gap.  This note addresses the
issue for reversible Markov matrices.

Denote by $\Cal M_n$ the set of $n\times n$ matrices with all entries
being nonnegative.  For $M\ismat n$, if its eigenvalues are
$\ev_1(M)$, \ldots, $\ev_n(M)$, counting multiplicity, then its
spectral radius is $\sr(M) = \max|\ev_i(M)|$ and its empirical
spectral distribution (ESD) is
\begin{align*}
  \esd M = n^{-1}\sum_{i=1}^n \delta_{\ev_i(M)},
\end{align*}
where $\delta_x$ is the probability measure concentrated at $x$.  By
Perron-Frobenius theorem (\cite{horn:13:cup}, p.~534), $\sr(M)$ is an
eigenvalue of $M$.  If $M 1_n=1_n$, where $1_n$ is the column vector
of $n$ 1's, then $M$ is called a Markov matrix and $\sr(M)=1$.
Letting $\ev_n(M) = \sr(M)$, $\sev(M)=\max_{i<n} |\ev_i(M)|$ and $1-
\sev(M)$ are known as the second largest absolute eigenvalue and the
spectral gap of $M$, respectively.  For $X\ismat n$, if all the
entries of $a:=X 1_n$ are positive, then its row-normalized version
refers to the Markov matrix $M = D^{-1}_a X$, where $D_a$ denotes the
diagonal matrix whose diagonal equals $a$.  If $X$ is symmetric, then
the Markov chain with transition matrix $M$ and initial distribution
$\pi = a/1'_n a$ is stationary and has the same distribution as its
time reversal, and for this reason $M$ is called reversible relative
to $\pi$.  Moreover, all $\ev_i(M)$ are real as $M$ is similar to
$D^{-1/2}_a M D^{-1/2}_a$, where $D^{1/2}_a$ denotes any symmetric
matrix whose square equals $D_a$.

Let $X_n\ismat n$ be symmetric random matrices with positive entries
almost surely (\as).  Let $M_n$ be its row-normalized version.
Suppose the diagonal and upper diagonal entries of $X_n$ are \iid$\sim
\nu_n$.  If $\nu_n = \nu$ for all $n$, then by \cite
{bordenave:10:alea}, provided that the $4\th$ moment of $\nu$ is
finite, $\sev(M_n)\to 0$ \as as $n\toi$.  On the other hand, if
$\nu$ is in the domain of attraction of a stable law of index in
$(0,2)$, then by \cite {bordenave:11:ap}, $\sev(M_n)\to 1$ \as.  In
either case, the spectral gap of $M_n$ cannot be tuned.  The results
suggest that, in order for the spectral gap or, equivalently,
$\sev(M_n)$ to be tunable, the (marginal) distribution of the entries
of $X_n$ needs to change according to $n$.

Indeed, there are simple solutions along this line.  Given $n\ge 5$,
randomly pick four different numbers $k, l, s$ and $t$ from 1, \ldots,
$n$.  Let $A_n=(\ai_{i j})\ismat n$ with $\ai_{i j} = \cf{\{i,j\} =
  \{k,l\}\text{ or } \{s,t\}}$, where $\cf{\cdot}$ is the indicator
function; $A_n$ is the adjacency matrix of a graph on $n$
vertices with only two edges.  Given $c>0$, let $M_n$ be the
row-normalized version of $c J_n+A_n$, with $J_n\ismat n$ a
matrix of $1/n$'s.  As $\det(z - M_n) = [z + 1/(1+c)]^2 z^{n - 5} [z
- (1-4/n)/(1+c)][z - 1/(1+c)](z-1)$, $\sev(M_n)=1/(1+c)$, so it can
be set at any value in $(0,1)$. 

The main problem with the example is that $M_n$ has few features.  It
is nearly the transition matrix of a chain of \iid random variables
uniformly taking $n$ values.  The lack of features is also reflected
in the ESD of $M_n$, which converges to $\delta_0$ as $n\toi$.
Despite this, the example shows that it is possible to tune the
spectral gap by using sparse random graphs.  In general, let $A_n$ be
the adjacency matrix of a random graph.  Define the row-normalized
version of $c J_n + A_n$ as
\begin{align} \label{e:matrices}
  M_n = D^{-1}_n (c J_n + A_n) \quad\text{with}\quad 
  D_n =D_{c 1_n + A_n 1_n}.
\end{align}
Although $A_n$ can be highly reducible, $M_n$ is always irreducible
and aperiodic and so $\sev(M_n)<1$.  Since all the eigenvalues of
$M_n$ are real, we always assume that they are sorted as
\begin{align*}
  -1< \ev_1(M_n)\le \ldots \le \ev_{n-1}(M_n) < \ev_n(M_n)=1.
\end{align*}
Then $\sev(M_n) = \max(|\ev_1(M_n)|, |\ev_{n-1}(M_n)|)$.  We simply
call $M_n$ reversible, as there is only one stationary distribution
associated with it.  The closely related matrix $I_n - D^{-1/2}_n (c
J_n + A_n) D^{-1/2}_n$ is known as a normalized Laplacian regularized
by $c$.  The effects of $c$ on spectral clustering and concentration
of the ESD have been studied in statistical machine learning
\cite{joseph:13:arXiv, le:15:arXiv}.

The close relation between random matrices and random graphs is well
known; see \cite{mckay:81:laa, tran:13:rsa, dumitriu:12:ap,
  bordenave:11:ap, bordenave:14:cpam, jiang:12:ptrf,
  narayan:12:isccsp, khorunzhy:01:advap, jiang:12:rmta, wood:12:aap,
  tao:08:ccm, gotze:10:ap} and references therein.  In \cite
{mckay:81:laa}, it is shown that if $A_n$ is the adjacency 
matrix of a random graph following the uniform distribution
$\drg_{n,d}$ on the set of regular graphs on $n$ vertices with fixed
degree $d\ge 2$, then as $n\toi$, $\esd{A_n}$ weakly converges \as
with limiting density $f(x) = d (4 d-4-x^2)^{1/2}_+/[2\pi (d^2-x^2)]$,
where $a_+ := \max(a,0)$.  By Weyl's inequality, $\ev_i(D^{-1}_n A_n)
\le \ev_i(M_n) \le \ev_{i+1}(D^{-1}_n  A_n)$ for $i<n$ (cf.\ 
\eqref{e:interlacing}).  Consequently, $M_n$ and $D^{-1}_n A_n =
(c+d)^{-1} A_n$ have the same limiting ESD density $(c+d)
f((c+d)x)$, whose support is the interval between $\pm 2 \sqrt{d-1}/(c
+d)$.  Thus $\sev(M_n)$ is asymptotically lower bounded by
$2\sqrt{d-1}/ (c+d)$.  On the other hand, by the above Weyl's
inequality and the fact that $\sr(A_n)$ is less than the maximum row
sum of $A_n$ (\cite {horn:13:cup}, p.~345--347), $\sev(M_n)\le
\sr(A_n)/ (c+d) \le d/(c+d)$.  In particular, when $d=2$, in which
case the graph consists of disjoint cycles, $\sev(M_n) \to 2/(c+2)$
\as.  Also, under various distributions on regular multigraphs
of fixed degree $d$ that allow multiple edges and, in some cases,
self-loops, for any fixed $l$, $|\ev_l(A_n)|$ and $\ev_{n-l}
(A_n)$ converge to $2\sqrt{d-1}$ in probability, yielding
$\sev(M_n)\to 2\sqrt{d-1}/(c+d)$ \cite{friedman:08:mams}.  However,
when $d>2$, $\sev(M_n)$ cannot be arbitrarily tuned as it is
asymptotically upper bounded by $2 \sqrt{d - 1}/d<1$.  Perhaps
important, under any distribution on regular (multi)graphs, since
$M_n$ is doubly Markov, i.e., $M'_n$ is Markov as well, the stationary
distribution associated with $M_n$ is uniform.  If one wishes to
sample a large Markov matrix with a nonuniform stationary
distribution, then a different random graph needs to be exploited.  We
also mention that for a uniformly sampled doubly Markov matrix, which
is irreversible \as, its limiting ESD is degenerate
\cite{nguyen:14:ap}.

We shall consider the row-normalized version $M_n$ of $c J_n + A_n$
with $A_n$ the adjacency matrix of a random graph following $\drg(n,
p/n)$, the distribution on graphs on $n$ vertices such that each pair
of vertices is connected by an edge with probability $p/n$,
independently from the other pairs (\cite{bollobas:98:sv-ny}, VII).
It is easy to see that for large $n$, the stationary distribution 
associated with $M_n$ is nonuniform with high probability.  We shall
fix $c>0$ and $p>0$ when deriving the asymptotic spectral properties
of $M_n$.  It is known that for both $\drg_{n,d}$ and $\drg(n,p/n)$,
if $d\toi$ and $p\toi$ as $n\toi$, then the ESD of suitably scaled and
centered $A_n$ tends to the semi-circle law \cite{tran:13:rsa,
  dumitriu:12:ap}.  It is also known that when $p>1$ is fixed, the
adjacency matrix of the giant component of a $\drg(n,
p/n)$-distributed graph has a spectral gap asymptotically equal to 0
\cite{narayan:12:isccsp}.  However, these results provide no
indication on the spectral properties of $M_n$.

For the rest of the note, denote
\begin{align*}
  \tau_c = 1/\sqrt{1+c}, \quad c\ge 0.
\end{align*}
One of the main results of the note is the following.
\begin{theorem} \label{t:gap}
  Fix $c>0$ and $p>0$.  For $n>p$, let $A_n$ be the adjacency matrix
  of a random graph following $\drg(n, p/n)$.  Let $k = \lfloor
  p\rfloor + 1$.  Fix $l\ge 1$ and $0<b<\tau_c$.  Then $\pr\{b\le
  \ev_{n-l}(M_n)\le \tau_{c/k} \text{ 
    and } -\tau_{c/k} \le \ev_l(M_n) \le -b\}\to 1$ as $n\toi$.
\end{theorem}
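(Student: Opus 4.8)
The plan is to pass to the symmetric matrix $S_n = D_n^{-1/2}(cJ_n+A_n)D_n^{-1/2}$, which is similar to $M_n$ and hence has the same (real) eigenvalues. Writing $v = D_n^{-1/2}1_n$ and $B_n = D_n^{-1/2}A_n D_n^{-1/2}$, one has the exact identity $S_n = B_n + (c/n)\,vv'$, a rank-one positive semidefinite perturbation of $B_n$. This is precisely the situation behind the interlacing $\ev_i(D_n^{-1}A_n)\le\ev_i(M_n)\le\ev_{i+1}(D_n^{-1}A_n)$ recalled in the excerpt, since $D_n^{-1}A_n$ is similar to $B_n$. Because $A_n$ is block diagonal along the connected components $C$ of the graph and the diagonal of $D_n$ is $c$ plus the degree, $B_n$ is block diagonal with blocks $B_C = D_C^{-1/2}A_C D_C^{-1/2}$; thus its spectrum is $\bigcup_C\mathrm{spec}(D_C^{-1}A_C)$ and $\sr(B_C)=\sr(D_C^{-1}A_C)$. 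The statement then reduces to two facts about the numbers $\sr(D_C^{-1}A_C)$.

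For the lower bound I use star components. A component isomorphic to $K_{1,d}$ contributes to $B_n$ the eigenvalues $\pm\mu_d$, where $\mu_d=\sqrt{d/[(c+d)(c+1)]}$, together with the eigenvalue $0$; and $\mu_d\uparrow\tau_c$ as $d\toi$. Given $b<\tau_c$, fix $d$ with $\mu_d>b$. In $\drg(n,p/n)$ the number of components isomorphic to $K_{1,d}$ has expectation of order $n$ and concentrates (second moment or Poisson approximation), so with probability tending to $1$ there are at least $l+1$ of them, even when $p>1$ and a giant component is present. Hence $B_n$ has at least $l+1$ eigenvalues equal to $\mu_d$ and at least $l+1$ equal to $-\mu_d$; feeding this into the interlacing gives that $\ev_{n-l}(M_n)$ is at least the $(l+1)$th largest eigenvalue of $B_n$, hence $\ge\mu_d>b$, and symmetrically $\ev_l(M_n)\le-\mu_d<-b$.

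For the upper bound it suffices to show $\sr(B_n)=\max_C\sr(D_C^{-1}A_C)\le\tau_{c/k}$ with probability tending to $1$: interlacing then yields $\ev_{n-l}(M_n)\le\sr(B_n)\le\tau_{c/k}$ and $\ev_l(M_n)\ge-\sr(B_n)\ge-\tau_{c/k}$ simultaneously for every $l$. To bound a single component I would use the Collatz--Wielandt principle: $\sr(D_C^{-1}A_C)\le\tau_{c/k}$ as soon as one exhibits a positive $u$ on $C$ with $\sum_{j\sim i}u_j\le\tau_{c/k}(c+d_i)u_i$ for every $i\in C$. Such a $u$ is built by a leaf-peeling recursion $r_i=\bigl[\tau(c+d_i)-\sum_{\text{children }j}r_j\bigr]^{-1}$ for the trial value $\tau$; the key algebraic fact is that $r\equiv\tau_c$ is a fixed point at $\tau=\tau_c$ for a vertex of any degree, which forces $\sr(D_T^{-1}A_T)<\tau_c\le\tau_{c/k}$ for every tree $T$ and explains why the star value $\tau_c$ is only a supremum. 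For components carrying cycles one cannot in general beat $\tau_{c/k}$, and the construction of $u$ must absorb the cyclic part; the constant $k=\lfloor p\rfloor+1$ enters through the structural properties of $\drg(n,p/n)$ that hold with probability tending to $1$ --- boundedness of the local subgraph density and the scarcity of adjacent high-degree vertices, so that neighbourhoods are effectively star-like and the relevant branching parameter is governed by $p$.

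The main obstacle is exactly this last step: the bound $\max_C\sr(D_C^{-1}A_C)\le\tau_{c/k}$ uniformly over all components, and most delicately over the giant component, where high-degree vertices and cycles coexist. The tree estimate through the fixed point $r\equiv\tau_c$ is clean, but turning it into an honest Collatz--Wielandt potential $u$ on a cyclic, unbounded-degree component, and extracting the precise constant $\tau_{c/k}$ from the degree and density structure of $\drg(n,p/n)$, is where the real work lies; everything else follows from the rank-one interlacing above.
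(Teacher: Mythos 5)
Your lower-bound argument is sound and is genuinely different from (and more elementary than) the paper's. The paper derives the lower bound by proving full weak convergence of $\esd{K_n}$ via local convergence to a Poisson Galton--Watson tree and then showing $\ess\sup\esd\infty=\tau_c$ using star trees in the limit; you instead count isolated $K_{1,d}$ components of the finite graph directly. Your eigenvalue computation $\pm\mu_d=\pm\sqrt{d/[(c+d)(c+1)]}\uparrow\pm\tau_c$ is correct, the expected number of such components is $\Theta(n)$ for every fixed $p>0$ and $d$, a second-moment argument gives concentration, and feeding $l+1$ copies of $\pm\mu_d$ into the Weyl interlacing $\ev_i(K_n)\le\ev_i(M_n)\le\ev_{i+1}(K_n)$ yields $\ev_{n-l}(M_n)\ge\mu_d>b$ and $\ev_l(M_n)\le-\mu_d<-b$. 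What you lose relative to the paper is only the by-product that the limiting ESD exists, is symmetric and nondegenerate; what you gain is a short, self-contained proof of exactly the eigenvalue statement needed.

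The upper bound, however, has a genuine gap, and you have correctly located it but not closed it. Your reduction to $\max_C\sr(D_C^{-1}A_C)\le\tau_{c/k}$ over components $C$, with a Collatz--Wielandt test vector built by leaf-peeling, works for trees (and can be pushed to unicyclic components, which is all one needs when $p<1$, where $k=1$ and the target is $\tau_c$ itself). But for $p\ge 1$ the giant component has unboundedly many cycles and maximum degree growing like $\log n/\log\log n$, and no per-component potential-function argument is supplied; the sentence ``the constant $k$ enters through the structural properties of $\drg(n,p/n)$'' is not a proof, and it is not even clear that a deterministic per-component bound of the form you want holds. The paper's key idea, which your proposal is missing, is a superposition (thinning) trick that avoids analyzing the supercritical graph altogether: one writes $A_n$ in distribution as $T_n=\sum_{s=1}^k T_{s,n}$ conditioned on $T_n\in\{0,1\}^{n\times n}$, where the $T_{s,n}$ are i.i.d.\ adjacency matrices of $\drg(n,p_n'/n)$ with $p_n'\to p/k<1$, and the conditioning event has probability bounded below. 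Each $T_{s,n}$ is then subcritical, so w.h.p.\ all its components are trees or unicyclic and $\sr(\Delta_{s,n}^{-1/2}T_{s,n}\Delta_{s,n}^{-1/2})\le\tau_{c/k}$ (the tree lemma applied with $c$ replaced by $c/k$, since each summand receives only $c/k$ of the regularization). Finally, using $\sum_s\Delta_{s,n}=\Delta_n$, the quadratic form bound
\begin{align*}
  |u'\Delta_n^{-1/2}T_n\Delta_n^{-1/2}u|
  \le
  \sum_s \sr(B_{s,n})\,|\Delta_{s,n}^{1/2}\Delta_n^{-1/2}u|^2
  \le
  \tau_{c/k}\,|u|^2
\end{align*}
transfers the bound to the full matrix. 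Without this (or some substitute handling the giant component), your proof of the $\tau_{c/k}$ side of the theorem is incomplete for $p\ge 1$.
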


Thus, roughly speaking, $\sev(M_n)$ asymptotically lies between
$\tau_c$ and $\tau_{c/k}$.  In particular, if $p\in (0,1)$, then
$\sev(M_n)\to \tau_c$ in probability.  To prove Theorem \ref {t:gap},
in Section \ref{s:upper}, we show that $\sev(M_n)$ is asymptotically
dominated by $\tau_{c/k}$.  Then, in Section \ref{s:esd}, we show that
$\esd{M_n}$ weakly converges in probability to a symmetric
nondegenerate distribution and characterize the moments of the
limiting distribution in terms of a random walk on a Galton-Watson
tree.  The proof uses the local convergence of random graphs
\cite{bordenave:11:ap, bordenave:11b:ap}.  In Section \ref{s:esssup},
we show that the essential supremum of the limiting distribution is
$\tau_c$, which together with the result in Section \ref{s:upper}
proves Theorem \ref {t:gap}.  In this section we also report some
numerical results which suggest that bounds for $\sev(M_n)$ are not
tight, especially the upper bound when $p$ is large.  Finally, in
Section \ref{s:mnt}, we provide a more explicit formula for the
moments of the limit of $\esd{M_n}$, using the standard moment method.
Some of the results in previous sections can also be established by
the method \cite{chi:15:arXiv}.

\subsection{Notation}
Following \cite {bollobas:98:sv-ny}, a (labeled) graph $G$ has no
multiple edges or self-loops, and all its edges are undirected.
Denote by $V(G)$ and $E(G)$ the vertex set and edge set of $G$,
respectively.  Each $e\in E(G)$ is an unordered pair $\{u,v\}$, with
$u\ne v\in V(G)$; $u$ is called an endpoint of $e$, denoted $u\in e$.
When direction has to be taken into account, denote by $(u,v)$ the
directed edge starting at $u$ and ending at $v$.  The adjacency matrix
of $G$ is $A_G = (\ai_{u v})_{u,v\in G}$ with $\ai_{u v} = \cf{\{u,v\}
  \in E(G)}$.  Denote by $|A|$ the cardinality of a set $A$.  Denote 
$|G|=|V(G)|$ and $e(G)=|E(G)|$, and refer to them as the order and
size of $G$, respectively.  For brevity, denote $u\in G$ if $u\in
V(G)$.  Denote by $d(u,G) := |\{e\in E(G): u\in e\}|$ the degree of
$u\in G$. If $G'$ is another graph, denote by $G\cup G'$ the graph
with vertex set $V(G)\cup V(G')$ and edge set $E(G)\cup E(G')$, and
denote $G\sim G'$ if the two graphs are isomorphic (\cite
{bollobas:98:sv-ny}, p.~3).  If $v\in G$ and $v'\in G'$, and if there
is a graph isomorphism $\sigma: G\to G'$, such that $\sigma(v) = v'$,
then $(G,v)$ and $(G',v')$ are called isomorphic rooted graphs (rooted
with $v$ and $v'$, respectively).  For a finite set $I$, denote by
$1_I$ the column vector of $1$'s indexed by $I$.  For $k\ge 1$, a path
on $I$ of length $k$ is a sequence $\vf v=(\eno v {k+1})$ with
$v_i\in I$ and $v_i \ne v_{i+1}$ 
for $i\le k$; note the requirement that adjacent $v_i$'s be different.
If $v_{k+1} = v_1$, then $\vf v$ is said to be closed.

For properties of $\drg(n,a)$, see \cite{bollobas:98:sv-ny,
  bollobas:01:cup}.  For $a\in [0,1]$, denote by $\dbern(a)$ the
Bernoulli distribution with mass $a$ on 1.  Denote by $\dpois(p)$ the
Poisson distribution with mean $p\ge 0$.  The essential supremum of a
measure $\nu$ on $\Reals$ is $\ess\sup\nu = \sup\{x:
\nu(x,\infty)>0\}$.  For $M\ismat n$ and $k\ge 0$, denote by
$\mnt_k(M)$ the $k\th$ moment of $\esd M$, which equals
$(1/n)\tr(M^k)$ (\cite {bai:10:sg-ny}, Eq.~(1.3.2)).

\section{Upper bound of spectral radius} \label{s:upper}
Fix $c>0$ and $p>0$.  For $n>p$, let $A_n = A_G$ with $G\sim \drg(n,
p/n)$.  Define $M_n$ and $D_n$ by \eqref{e:matrices}.  The spectrum
of $M_n$ is identical to that of $D^{-1/2}_n (c J_n + A_n)
D^{-1/2}_n$.  Since $D^{-1/2}_n J_n D^{-1/2}_n$ is of rank one with
the only nonzero eigenvalue being positive, by Weyl's inequality
(\cite {horn:13:cup}, Corollary 4.3.3)
\begin{align}  \label{e:interlacing}
  \ev_i(D^{-1}_n A_n) \le \ev_i(M_n) \le \ev_{i+1}(D^{-1}_n A_n),
  \quad 1\le i<n.
\end{align}
Consequently, to prove the bound involving $\tau_{c/k}$ in
Theorem \ref{t:gap}, i.e., given $l\ge 1$, $\pr\{\ev_{n-l}(M_n)\le
\tau_{c/k}$ and $\ev_l(M_n) \ge -\tau_{c/k}\}\to 1$, it suffices to
prove the following.
\begin{prop}  \label{p:upper}
  Let $p>0$ and $k=\lfloor p\rfloor + 1$.  Then $\pr\{\sr(D^{-1}_n
  A_n)\le \tau_{c/k}\}\to 1$ as $n\toi$. 
\end{prop}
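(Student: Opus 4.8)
The plan is to pass to the symmetric matrix and run a trace (moment) bound. Since $D^{-1}_n A_n = D^{-1/2}_n W_n D^{1/2}_n$ with $W_n := D^{-1/2}_n A_n D^{-1/2}_n$, the two matrices are similar, so $\sr(D^{-1}_n A_n) = \sr(W_n) = \|W_n\|$, the last equality because $W_n$ is symmetric with nonnegative entries and hence its Perron root equals its operator norm. For every $m\ge 1$ one then has $\sr(W_n)^{2m}\le \tr(W^{2m}_n)$, and a cyclic-trace manipulation gives the convenient identity $\tr(W^{2m}_n) = \tr\bigl((D^{-1}_n A_n)^{2m}\bigr)$. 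Expanding the right-hand side over closed walks of length $2m$ in $G$ (sequences $v_0,\dots,v_{2m-1}$ with consecutive vertices adjacent and $v_{2m}=v_0$), each admissible walk carries weight $\prod_{i=0}^{2m-1}(c+d(v_i,G))^{-1}$, so that $\tr(W^{2m}_n)=\sum_u q_{2m}(u)$, where $q_{2m}(u)$ is the probability that the walk which jumps from any $v$ to each of its neighbors with probability $(c+d(v,G))^{-1}$ and is killed with the remaining probability $c/(c+d(v,G))$ returns to $u$ at time $2m$ without being killed.

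Next I would take expectations. Because the vertices of $\drg(n,p/n)$ are exchangeable, $\mean\tr(W^{2m}_n) = n\,\mean q_{2m}(o)$ for a fixed root $o$, and for fixed $m$ the local weak convergence of $(\drg(n,p/n),o)$ to the $\dpois(p)$ Galton--Watson tree makes $\mean q_{2m}(o)$ converge to $\beta_{2m}$, the $2m\th$ moment of the limiting ESD---the very quantity developed for the ESD in Section \ref{s:esd} (the $cJ_n$ term is a rank-one perturbation and does not affect the limit). The aim is a bound of the form $\mean\tr(W^{2m}_n)\le n\,\tau^{2m}_{c/k}\,\phi(m)$ with $\phi$ subexponential, valid uniformly for $m$ up to order $\log n$. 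Granting this, Markov's inequality yields $\pr\{\sr(W_n)>s\,\tau_{c/k}\}\le n\,s^{-2m}\phi(m)$ for every $s>1$; choosing $m = m_n\toi$ with $m_n\gg\log n$ drives the right-hand side to $0$, and letting $s\downarrow 1$ gives $\sr(D^{-1}_n A_n)\le\tau_{c/k}$ with probability tending to $1$.

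The hard part is the uniform moment estimate $\mean q_{2m}(o)\le\tau^{2m}_{c/k}\phi(m)$, and this is where $k=\lfloor p\rfloor+1$ enters. For fixed $m$ one only needs $\beta_{2m}\le\tau^{2m}_{c/k}$ on the Galton--Watson tree, but the trace-to-spectral-radius passage forces $m$ to grow like $\log n$, which is comparable to the radius out to which $\drg(n,p/n)$ is tree-like; beyond the simplest returns the walk can try to exploit local cycles and atypically high-degree vertices. The structural facts I would invoke are the standard ones for $\drg(n,p/n)$: every subgraph on $O(\log n)$ vertices has excess at most one, and the degrees met along a walk are controlled, the typical one lying below $k$. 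Turning these into a clean per-step loss factor $\tau_{c/k}$, uniformly in $m$ and robustly against the rare dense or high-degree configurations, is the crux; it is also why the attainable constant is the non-sharp $\tau_{c/k}$ rather than the essential supremum $\tau_c$ of Section \ref{s:esssup}, since it is precisely the cruder estimate that survives all the way up to $m\sim\log n$. I note that the naive shortcut of bounding $\sr(W_n)^2=\sr\bigl((D^{-1}_n A_n)^2\bigr)$ by the maximum row sum of $(D^{-1}_n A_n)^2$ does not work, as that row sum already exceeds $\tau^2_{c/k}$ for typical vertices; the cancellation captured by the full trace is essential.
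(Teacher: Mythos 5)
Your proposal is a plan rather than a proof: the entire burden of the proposition is shifted onto the estimate $\mean q_{2m}(o)\le \tau^{2m}_{c/k}\,\phi(m)$ holding uniformly for $m$ of order $\log n$ (indeed you need $m\gg \log n$, or at least $m\ge C_s\log n$ with $C_s\toi$ as $s\downarrow 1$, for the Markov-inequality step to close), and you explicitly leave that estimate unproven, calling it ``the crux.'' The structural facts you propose to feed into it do not suffice in the relevant range. For $p>1$ the statement that every subgraph on $O(\log n)$ vertices has excess at most one is false once the number of vertices exceeds roughly $\log n/(1+\log p)$: a first-moment count shows that connected subgraphs with two independent cycles on $C\log n$ vertices do appear with high probability for large $C$, so walks of the length you need can exploit genuinely dense local configurations, and controlling their contribution to the expected trace is a Friedman-type analysis, not a routine step. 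Moreover, nothing in your sketch actually produces the constant $\tau_{c/k}=1/\sqrt{1+c/k}$: degrees along the walk are Poisson-like and unbounded, and ``the typical degree lies below $k$'' does not convert into a per-step loss factor of $1/\sqrt{1+c/k}$. So the place where $k=\lfloor p\rfloor+1$ is supposed to enter is precisely the place where the argument is missing.

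For contrast, the paper avoids the trace method entirely. It first proves a deterministic lemma: for any connected $G$ that is a tree or a unicyclic graph, $\sr(K_G)<\tau_c$, via an identity obtained by weighting the Perron--Frobenius eigenvector by powers of $\sr(K_G)$ along distance levels from a root or from the cycle. For $p<1$ this settles the claim because with high probability every component of $\drg(n,p/n)$ is a tree or unicyclic, and the spectrum of $K_n$ is the union of the component spectra. For $p\ge 1$ it realizes $A_n$, conditionally on a high-probability event, as the superposition of $k$ \iid subcritical adjacency matrices $T_{s,n}$ with edge probability $p'_n/n\to (p/k)/n$, splits $D_n=\sum_s\Delta_{s,n}$ with $\Delta_{s,n}=D_{c1_n/k+T_{s,n}1_n}$, and bounds the quadratic form of $\Delta^{-1/2}_nT_n\Delta^{-1/2}_n$ by $\tau_{c/k}|u|^2$ using $\sr(\Delta^{-1/2}_{s,n}T_{s,n}\Delta^{-1/2}_{s,n})\le\tau_{c/k}$ for each subcritical piece. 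That decomposition is exactly what manufactures the constant $c/k$, and it is the ingredient your approach lacks. If you want to salvage a moment-method proof, you would still need either this superposition idea or a full path-counting analysis robust to high-degree vertices and excess-two subgraphs; as written, the proposal has a genuine gap.
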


For graph $G$, denote
\begin{gather} \label{e:K}
  K_G = K_G(c) = D^{-1}_{c 1_{V(G)} + A_G 1_{V(G)}} A_G
\end{gather}
and analogously $K_n = D^{-1}_n A_n$.  Put $q = \sr(K_G)$.  By
Perron-Frobenius theorem (\cite {horn:13:cup}, p.~534) $q =
\ev_{|G|}(K_G)$ and if $|G|>1$ and $G$ is connected, then $q>0$ and
there is a vector $f=(f(u))_{u\in G}$ with all $f_u> 0$, such that
\begin{align}  \label{e:PF}
  K_G f = q f.
\end{align}
Denote by $N(u)$ the neighborhood of $u$ in $G$, i.e., the set of $v\in
G$ with $\{u,v\}\in E(G)$.
\begin{lemma} \label{l:sr}
  Let $G$ be connected with $|G|>1$ and $C\ne\emptyset$ be a subset
  of $V(G)$.  Denote by $h(u)$ the distance of $u\in G$ to $C$.
  Define $\omega(u) = f(u) q^{h(u)}$.  For $i=0,\pm 1$, define $N_i(u)
  = \{v\in N(u): h(v) = h(u)+i\}$ and $d_i(u) = |N_i(u)|$.  Then
  \begin{align} \label{e:SR}
    q^{-1} \sum d_0(u) \omega(u) + q^{-2} \sum d_{-1}(u) \omega(u)
    =
    \sum [d_0(u) + d_{-1}(u) + c] \omega(u).
  \end{align}
\end{lemma}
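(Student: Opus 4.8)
The plan is to read off the eigenvalue equation \eqref{e:PF} one coordinate at a time, weight it by a power of $q$, and sum over all vertices. Writing $K_G f = q f$ at a vertex $u$ gives
\[
  \sum_{v\in N(u)} f(v) = q\,\bigl(c + d(u,G)\bigr)\,f(u),
\]
because the $u$-th row of $K_G$ divides the indicator of $N(u)$ by $c + d(u,G)$. Since graph distance changes by at most $1$ along an edge, every $v\in N(u)$ has $h(v)\in\{h(u)-1,h(u),h(u)+1\}$, so $N(u)$ is the disjoint union $N_{-1}(u)\cup N_0(u)\cup N_{+1}(u)$ and $d(u,G) = d_{-1}(u)+d_0(u)+d_{+1}(u)$. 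I would record this decomposition first, since it is what makes the powers of $q$ line up.

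Next I would multiply the coordinate equation by $q^{h(u)}$ and substitute $f(v)=\omega(v)\,q^{-h(v)}$. For $v\in N_i(u)$ one has $h(v)=h(u)+i$, hence $q^{h(u)}f(v)=q^{-i}\omega(v)$, and the equation becomes
\[
  q\sum_{v\in N_{-1}(u)}\omega(v) + \sum_{v\in N_0(u)}\omega(v)
  + q^{-1}\sum_{v\in N_{+1}(u)}\omega(v)
  = q\,\bigl(c + d(u,G)\bigr)\,\omega(u)
\]
for each $u$. Here $q>0$ by Perron-Frobenius, so the negative powers are legitimate, and $h(u)$ is finite for every $u$ because $G$ is connected and $C\ne\Empty$.

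The crux is to sum this identity over $u\in V(G)$ and re-index the three neighbor sums as vertex sums weighted by the $d_i$. The key observation is the symmetry $v\in N_i(u)\iff u\in N_{-i}(v)$; counting ordered adjacent pairs two ways yields
\begin{gather*}
  \sum_u\sum_{v\in N_0(u)}\omega(v) = \sum_u d_0(u)\,\omega(u),\\
  \sum_u\sum_{v\in N_{-1}(u)}\omega(v) = \sum_u d_{+1}(u)\,\omega(u),\\
  \sum_u\sum_{v\in N_{+1}(u)}\omega(v) = \sum_u d_{-1}(u)\,\omega(u).
\end{gather*}
Substituting these and writing $d(u,G)=d_{-1}(u)+d_0(u)+d_{+1}(u)$ on the right turns the summed identity into
\[
  q\sum_u d_{+1}(u)\,\omega(u) + \sum_u d_0(u)\,\omega(u)
  + q^{-1}\sum_u d_{-1}(u)\,\omega(u)
  = q\sum_u\bigl(c + d_{-1}(u)+d_0(u)+d_{+1}(u)\bigr)\omega(u).
\]
The term $q\sum_u d_{+1}(u)\,\omega(u)$ cancels against its counterpart on the right, and dividing what remains by $q$ gives exactly \eqref{e:SR}.

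I expect the only subtle point to be the re-indexing of the $N_{-1}$ and $N_{+1}$ sums. It is tempting to treat them symmetrically, as with the $N_0$ sum, but the asymmetric factors $q$ and $q^{-1}$ must be carried along; it is precisely the exchange $N_{-1}\leftrightarrow N_{+1}$ under swapping the two endpoints of an edge that produces the $d_{+1}$ and $d_{-1}$ weights with the correct powers of $q$. Everything else is bookkeeping, and connectivity together with $C\ne\Empty$ is used only to ensure that $h$ is finite and that the positive eigenpair $(q,f)$ of \eqref{e:PF} exists.
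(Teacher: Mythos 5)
Your proof is correct and follows essentially the same route as the paper: write the eigenvalue equation coordinatewise, split $N(u)$ into $N_{-1}(u)\cup N_0(u)\cup N_{+1}(u)$, weight by a power of $q$, sum over $u$, and re-index via $v\in N_i(u)\Iff u\in N_{-i}(v)$ so that the $d_{+1}$ terms cancel. The only cosmetic difference is that you multiply by $q^{h(u)}$ and divide by $q$ at the end, whereas the paper multiplies by $q^{h(u)-1}$ directly.
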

\begin{proof}
  Since $N_0(u)$, $N_-(u)$, and $N_{+1}(u)$ partition $N(u)$,
  \eqref{e:PF} can be written as
  \begin{align*}
    \sum_{v\in N_{-1}(u)} f(v) + \sum_{v\in N_0(u)} f(v) + 
    \sum_{v\in N_{+1}(u)} f(v)
    =
    q[c+d(u,G)] f(u).
  \end{align*}
  Multiplying both sides by $q^{h(u)-1}$ yields
  \begin{align*}
    \sum_{v\in N_{-1}(u)} \omega(v) +
    q^{-1}\sum_{v\in N_0(u)} \omega(v)
    + q^{-2} \sum_{v\in N_{+1}(u)}\omega(v)
    =
    [c+d(u,G)] \omega(u).
  \end{align*}
  Take sum over $u$.  Since $v\in N_{-1}(u)\Iff u\in N_{+1}(v)$
  and $v\in N_0(u)\Iff u\in N_0(v)$,
  \begin{align*}
    \sum_u \sum_{v\in N_{-1}(u)} \omega(v)
    = 
    \sum_v \sum_{u\in N_{+1}(u)} \omega(v)
    =
    \sum_v d_{+1}(v)\omega(v)
  \end{align*}
  and likewise,
  \begin{align*}
    \sum_u \sum_{v\in N_0(u)} = \sum_v d_0(v) \omega(v), \quad
    \sum_u \sum_{v\in N_{+1}(u)} = \sum_v d_{-1}(v) \omega(v).
  \end{align*}
  Combining the equations and noticing $d(u,G) = d_0(u) +
  d_{-1}(u) + d_{+1}(u)$, \eqref{e:SR} then follows.
\end{proof}

\begin{lemma} \label{l:SR-tree}
  Let $G$ be a connected graph.  If $G$ is a tree or a unicyclic
  graph, then
  \begin{align} \label{e:SR-upper}
    \sr(K_G) < \tau_c.
  \end{align}
  Furthermore, if $G$ is a unicyclic graph, then
  \begin{align} \label{e:SR-lower}
    \sr(K_G) \ge (1+c/2)^{-1} \quad\text{with
      ``$=$'' $\Iff G$ is a cycle}.
  \end{align}
\end{lemma}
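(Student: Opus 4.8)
The plan is to read off bounds on $q=\sr(K_G)$ from the single scalar identity \eqref{e:SR} of Lemma~\ref{l:sr}, applying it with a reference set $C$ chosen to exploit the special structure of trees and of unicyclic graphs. The case $|G|=1$ is trivial, since then $A_G=0$ and $\sr(K_G)=0<\tau_c$; a unicyclic graph moreover always has $|G|\ge 3$. So I assume $|G|>1$, in which case the Perron eigenvector $f$ of \eqref{e:PF} is strictly positive and $q>0$, whence every weight $\omega(u)=f(u)q^{h(u)}$ is positive.

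For a tree I would take $C=\{r\}$, a single root. In a tree every non-root vertex has a unique neighbor one step closer to $r$ and no neighbor at the same distance, so $d_0(u)=0$ for all $u$, while $d_{-1}(r)=0$ and $d_{-1}(u)=1$ for $u\ne r$. Writing $S=\sum_{u\ne r}\omega(u)>0$, the identity \eqref{e:SR} collapses to $q^{-2}S=(1+c)S+c\,\omega(r)$. Since $c\,\omega(r)>0$ and $S>0$, this forces $q^{-2}>1+c$, i.e.\ $q<\tau_c$, giving \eqref{e:SR-upper} for trees.

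For a unicyclic graph I would instead take $C$ to be the vertex set of its unique cycle. Then each cycle vertex ($h=0$) has exactly two cycle-neighbors and, by unicyclicity, no chords, so $d_0=2$ and $d_{-1}=0$; each off-cycle vertex ($h\ge 1$) behaves as in a tree, with a unique parent toward the cycle and no horizontal edge, so $d_0=0$ and $d_{-1}=1$ (a second neighbor at distance $h$ or $h-1$ would create an additional cycle). Splitting the weights as $T=\sum_{u\in C}\omega(u)$ and $S=\sum_{u\notin C}\omega(u)$, the identity \eqref{e:SR} becomes $2q^{-1}T+q^{-2}S=(2+c)T+(1+c)S$, i.e.
\begin{align*}
  \alpha T+\beta S=0,\qquad \alpha=2q^{-1}-(2+c),\quad \beta=q^{-2}-(1+c).
\end{align*}

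The remaining work is elementary sign analysis, resting on the inequality $\sqrt{1+c}<1+c/2$ for $c>0$. If $q^{-1}\le\sqrt{1+c}$ then $\beta\le 0$ and, since then $q^{-1}<1+c/2$, also $\alpha<0$, forcing $\alpha T+\beta S<0$ (as $S,T>0$), a contradiction; hence $q^{-1}>\sqrt{1+c}$, i.e.\ $q<\tau_c$, which is \eqref{e:SR-upper} for unicyclic graphs. Symmetrically, if $q^{-1}\ge 1+c/2$ then $\alpha\ge 0$ and $\beta>0$, forcing $\alpha T+\beta S>0$ unless $S=0$; hence $q\ge(1+c/2)^{-1}$, with equality possible only when $S=0$, i.e.\ when there are no off-cycle vertices, i.e.\ when $G$ is a cycle (for which $K_G=(c+2)^{-1}A_G$ indeed has spectral radius $2/(c+2)=(1+c/2)^{-1}$). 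I expect the main obstacle to be bookkeeping rather than conceptual: correctly enumerating $d_0$ and $d_{-1}$ at cycle versus off-cycle vertices, and arranging the sign argument so that both the strictness in \eqref{e:SR-upper} and the exact equality characterization in \eqref{e:SR-lower} fall out cleanly.
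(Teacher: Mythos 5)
Your proposal is correct and follows essentially the same route as the paper: it applies the identity \eqref{e:SR} with $C$ a single root for trees and $C$ the cycle for unicyclic graphs, computes the same values of $d_0$ and $d_{-1}$, and then reads off the bounds by a sign analysis of the resulting scalar equation (your two-case analysis via $\sqrt{1+c}<1+c/2$ is just a minor reorganization of the paper's contradiction argument). The only cosmetic slip is the parenthetical ``as $S,T>0$'' in the unicyclic upper-bound step, since $S=0$ when $G$ is a cycle, but the inequality $\alpha T<0$ already yields the contradiction there.
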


\begin{proof}
  First, let $G$ be a tree.  If $|G|=1$, then $K_G =0$ and \eqref
  {e:SR-upper} is trivial.  Let $|G|\ge 2$.  Pick an arbitrary vertex
  $\theta\in G$ and let $C=\{\theta\}$.  It is easy to see that for
  any $u\in G$, $d_0(u)=0$ and $d_{-1}(u)=\cf{u\ne \theta}$.  Then
  \eqref{e:SR-upper} follows from \eqref{e:SR}, which now takes the
  form
  \begin{align} \label{e:SR-tree}
    q^{-2} \sum_{u\ne\theta} \omega(u)
    =
    (1+c)\sum_{u\ne\theta}\omega(u) + c \omega(\theta).
  \end{align}
  Next, let $G$ be unicyclic.  Let $C$ be the cycle subgraph of $G$.
  Then $|C|\ge 3$.  The subgraph of $G$ obtained by removing the edges
  in $C$ consists of $|C|$ isolated trees, each containing exactly one
  vertex in $C$.  It can be seen that $d_0(u) = 2\cf{u\in C}$ and
  $d_{-1}(u) = \cf{u\not\in C}$.  Then by
  \eqref{e:SR},
  \begin{align*}
    (2/q)\sum_{u\in C} \omega(u) + q^{-2} \sum_{u\not\in C} \omega(u)
    =
    (c+2)\sum_{u\in C}\omega(u) + 
    (c+1)\sum_{u\not\in C}\omega(u).
  \end{align*}
  If $G$ is a cycle, then $C=G$ and the equation yields $q=1/(1+c/2)$.
  If $G$ is not a cycle, then $\sum_{u\not\in C} \omega(u)>0$.  If
  $q\le 1/(1+c/2)$, then from $2/q\ge c+2$ and $\sum_{u\in C}
  \omega(u)> 0$, it follows that $c+1\ge q^{-2}$, or $q\ge \tau_c >
  1/(1+c/2)$, which is a contradiction.  Thus $q>1/(1+c/2)$.  But then
  $2/q<c+2$, implying $q^{-2}>c+1$, or $q< \tau_c$.
\end{proof}

It may be worth noting that if $G$ is a tree, then $|G|\toi$ does
not guarantee that $\sr(K_G)\to \tau_c$.  For example, suppose $d(v,
G)<1+c$ for all $v\in G$.  Put $d_0 = \lceil c\rceil$.  Then $d(v,
G)\le d_0$.  Let $f$ be as in \eqref{e:PF} and $\theta = \arg\max
f(v)$.  Then for $k\ge 1$, $\sum_{h(u) = k} \omega(u) \le d^k_0 q^k
f(\theta) < [d_0/(1+c)]^k \omega(\theta)$, giving $\sum_{u\ne \theta}
\omega(u) \le b \omega(\theta)$ with $b = \sum_k [d_0/(1+c)]^k <
\infty$.  Then by \eqref{e:SR-tree}, $q\not\to \tau_c$.

\begin{proof}[Proof of Proposition \ref{p:upper}]
  By definition, $K_n = D^{-1}_n A_n = K_G$ with $G\sim \drg(n,p/n)$.
  First, suppose $0<p<1$.   Write the connected components of $G$
  as $\eno G s$.  Then $K_G$ can be partitioned as
  \begin{align*}
    K_G
    =
    \begin{pmatrix}
      K_{G_1}  & & \\
      & \ddots & \\
      & & K_{G_s}
    \end{pmatrix}.
  \end{align*}
  The eigenvalues of $K_G$ therefore are exactly those of
  $K_{G_i}$, counting multiplicity.  Since $0<p<1$, $\pr\{$all
  $G_i$ are trees or unicyclic graphs$\}\to 1$ as $n\toi$ (\cite
  {bollobas:01:cup}, Corollary 5.8).  This combined with Lemma
  \ref{l:SR-tree} yields $\pr\{\sr(K_n) \le \tau_c\}\to 1$.

  To continue, note that given $0<p_0<p_1<1$, as $n\toi$,
  \begin{align} \label{e:p-uniform}
    \inf_{p_0\le p\le p_1}
    \pr\nolimits_p\{\sr(K_n) \le \tau_c\}\to 1,
  \end{align}
  where $\pr_p$ denotes probability under $\drg(n, p/n)$.  Indeed,
  from the proof of Theorem 5.7 and Corollary 5.8 in
  \cite{bollobas:01:cup}, as $n\toi$, $\inf_{p_0\le p\le
    p_1}\pr\{$every component of $G$ is a tree or a unicyclic
  graph$\}\to 1$.  Then \eqref{e:p-uniform} follows from the same
  argument for the already-proved case $0<p<1$.

  Now let $p\ge 1$.  Then $k:=\lfloor p\rfloor + 1>1$.  For $n>p$, let
  $T_{1,n}$, \ldots, $T_{k,n}$ be \iid$\sim A_G$ with $G\sim \drg(n,
  p'_n/n)$, where
  \begin{align*}
    p'_n = \frac{p}{k-(k-1)p/n}.
  \end{align*}
  Since $p'_n\in (0,n)$, $G$ is well defined.  Let $T_n = (t_{i j}) =
  \sum_{s=1}^k T_{s,n}$.  Since $t_{i j}$, $i<j$, are \iid, for any
  $B = (b_{i j})\in \{0,1\}^{n\times n}$ with $b_{i j} = b_{j i}$ and
  $b_{i i}=0$,
  \begin{align*}
    \pr\{T_n = B\gv T_n\in \{0,1\}^{n\times n}\}
    &=
    \frac{\pr\{t_{i j}=b_{i j},\, i<j\}}
    {\pr\{t_{i j}\in \{0,1\},\, i<j\}}
    =
    \prod_{i<j} \frac{\pr\{t_{i j}=b_{i j}\}}
    {\pr\{t_{i j}\in \{0,1\}\}}.
  \end{align*}
  Since $\pr\{t_{i j}=0\} = (1-p'_n/n)^k$ and $\pr\{t_{i j} = 1\} = k
  (p'_n/n)(1-p'_n/n)^{k-1}$, direct calculation shows that conditional
  on it being in $\{0,1\}^{n\times n}$, $T_n$ has the same
  distribution as $A_n$.  For $i<j$, $\pr\{t_{i j}\in\{0,1\}\} \ge 1
  -[k(k-1)/2] (p'_n/n)^2$.  On the other hand,  $p'_n\to p/k$ as
  $n\toi$.  Then for $n$ large enough, $\pr\{T_n \in \{0,1\}^n\} >
  \exp(-p^2)$, so letting $\Delta_n = D_{c1_n  + T_n 1_n}$ and
  $C=\exp(p^2)$, for any $x$,
  \begin{align} \label{e:decomp}
    \pr\{\sr(K_n) > x\} 
    &= 
    \pr\{\sr(\Delta^{-1}_n T_n)> x\gv T_n \in
    \{0, 1\}^{n\times n}\}
    \nonumber \\
    &\le
    C\pr\{\sr(\Delta^{-1/2}_n T_n \Delta^{-1/2}_n)> x\}.
  \end{align}
  Put $\Delta_{s,n} = D_{c 1_n /k +  T_{s,n} 1_n}$ and $B_{s,n} =
  \Delta^{-1/2}_{s,n} T_{s,n} \Delta^{-1/2}_{s,n}$.  Then
  $\Delta_n = \Delta_{1,n} + \cdots + \Delta_{k,n}$ and
  \begin{align*}
    \Delta^{-1/2}_n T_n \Delta^{-1/2}_n
    =
    \sum_s \Delta^{-1/2}_n \Delta^{1/2}_{s,n} B_{s,n}
    \Delta^{1/2}_{s,n} \Delta^{-1/2}_n.
  \end{align*}

  Fix an arbitrary $a\in (p/k, 1)$.  For $n$ large enough, $p'_n\in
  [p/k, a]$.  Then by \eqref{e:p-uniform}, the probability of the
  event that $\sr(B_{s,n})\le \tau_{c/k}$ for all $1\le s\le k$ tends
  to 1.  On this event, for any $u\in\Reals^n$,
  \begin{align*}
    |u'\Delta^{-1/2}_n T_n \Delta^{-1/2}_n u|
    &\le
    \sum_s |u'\Delta^{-1/2}_n \Delta^{1/2}_{s,n} B_{s,n}
    \Delta^{1/2}_{s,n} \Delta^{-1/2}_n u|
    \\
    &\le
    \sum_s \sr(B_{s,n}) |\Delta^{1/2}_{s,n} \Delta^{-1/2}_n u|^2
    \\
    &\le
    \tau_{c/k} \sum_s |\Delta^{1/2}_{s,n} \Delta^{-1/2}_n u|^2
    =
    \tau_{c/k} |u|^2.
  \end{align*}
  It follows that $\pr\{\sr(\Delta^{-1/2}_n T_n \Delta^{-1/2}_n)\le
  \tau_{c/k}\} \to 1$, so by \eqref{e:decomp}, $\pr\{\sr(D^{-1}_n A_n)
  \le \tau_{c/k}\} \to 1$.
\end{proof}

\section{Convergence of ESD} \label{s:esd}
Let $A_n$, $M_n$, $D_n$, and $K_n = D^{-1}_n A_n$ be as in previous
sections.  We shall show that $\esd{M_n}$ weakly converges as $n\toi$.
From Weyl's inequality \eqref{e:interlacing}, $\esd{M_n}$ weakly
converges in probability (resp.\ \as) $\Iff\esd{K_n}$ does so in
probability (resp.\ \as) and, provided the convergence holds, the two
ESDs have the same limit.  Therefore, we shall focus on $\esd{K_n}$
instead.  The approach we shall take is the local convergence of
random graphs; see \cite {bordenave:11b:ap} and references therein,
and \cite {bordenave:11:ap} for extension to the ESD of random
matrices whose entries belong to the domain of attraction of stable
laws.

Let $G$ be a graph and $v_0\in G$.  Fix $c\ge 0$.  Consider the
following random walk on $G$ starting from $v_0$ at step 0.  If
$d(v_0, G)\ge 1$, then if at step $k\ge 0$ the random walk is at $v$
with $d(v, G)=d (\ge 1)$, then at step $k+1$, it either moves to a
neighbor of $v$ with probability $1/(c+d)$, or is killed with
probability $c/(c+d)$.  If $d(v_0, G)=0$, then the random walk is
killed at step 1, regardless of the value of $c$.  Let
\begin{align*}
  r_k(G, v_0, c) =
  \pr\{\text{the random walk is alive and at $v_0$ at step}\ k\}.
\end{align*}
Let $\Empty$ be an arbitrary element.   Denote by $[(G, v_0)]$ the
class of graphs rooted with $\Empty$ that are isomorphic to $(G,
v_0)$.  Then $r_k(G,v_0,c)$ depends on $(G,v_0)$ only through $[(G,
v_0)]$. 

Recall that in order for $K_n = D^{-1}_n A_n$ to be always well
defined, $c$ has to be strictly positive.  In the following, we
redefine $D_n$ such that its $i\th$ diagonal element is 1 if the
entire $i\th$ row of $A_n$ is 0.  With this definition, $c$ can be 0. 

\begin{theorem} \label{t:esd}
  Let $c\ge 0$.  As $n\toi$, $\esd{K_n}$ weakly converges in
  probability.  The weak convergence is \as if $n$ is replaced with
  any subsequence $n_j$ with $\sum n^{-1}_j<\infty$.  The limiting
  distribution is symmetric and nondegenerate, and for $k\ge 1$, its
  $k\th$ moment is $\mnt_k=\mean r_k(T, \Empty, c)$, where $T$ be a
  random Galton-Watson tree rooted with $\Empty$ and with $\dpois(p)$
  offspring distribution.
\end{theorem}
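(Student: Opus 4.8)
The plan is to use the method of moments, reading the moment $\mnt_k(K_n)=(1/n)\tr(K_n^k)$ as an average of return probabilities of the killed walk. The first step is the identity $(K_G^k)_{v_0v_0}=r_k(G,v_0,c)$. Indeed, the $(u,v)$ entry of $K_G$ equals $1/(c+d(u,G))$ when $v\in N(u)$ and $0$ otherwise, which is exactly the one-step probability that the killed walk moves from $u$ to $v$, while $1-\sum_v(K_G)_{uv}=c/(c+d(u,G))$ is its killing probability at $u$; an isolated $u$ has $(K_G)_{u\,\cdot}=0$, matching the rule that the walk dies at step $1$. Expanding $(K_G^k)_{v_0v_0}=\sum(K_G)_{v_0u_1}(K_G)_{u_1u_2}\cdots(K_G)_{u_{k-1}v_0}$ over closed walks of length $k$ based at $v_0$ then gives precisely the probability that the walk is alive and back at $v_0$ at step $k$. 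Summing over $v_0$ gives $\mnt_k(K_n)=(1/n)\sum_{v\in G_n}r_k(G_n,v,c)=\mean[\,r_k(G_n,V_0,c)\mid G_n\,]$, where $V_0$ is uniform on $V(G_n)$; taking expectations and using the vertex exchangeability of $\drg(n,p/n)$, $\mean\mnt_k(K_n)=\mean r_k(G_n,v,c)$ for every fixed vertex $v$.

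Next I would pass to the local limit. Since $r_k(G,v_0,c)$ depends on $(G,v_0)$ only through the rooted isomorphism class of the ball of radius $k$ about $v_0$ and is bounded by $1$, convergence of the expected moments follows from the local weak convergence of $\drg(n,p/n)$ to the $\dpois(p)$ Galton-Watson tree $(T,\Empty)$ (cf.\ \cite{bordenave:11b:ap}): the rooted $k$-neighborhood of a fixed (equivalently, uniformly chosen) vertex converges in distribution to the depth-$k$ truncation of $T$, so $\mean\mnt_k(K_n)\to\mean r_k(T,\Empty,c)=:\mnt_k$.

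The principal work is the concentration bound $\var(\mnt_k(K_n))=O(1/n)$ for each fixed $k$. Writing $\var(\mnt_k(K_n))=n^{-2}\sum_{u,v}\cov(r_k(G_n,u,c),r_k(G_n,v,c))$, the key point is that $r_k(G_n,u,c)$ is a function of the edges incident to the ball $B_{\lfloor k/2\rfloor}(u)$; since distinct edges of $\drg(n,p/n)$ are independent, $r_k(G_n,u,c)$ and $r_k(G_n,v,c)$ are independent, and their covariance vanishes, unless these two edge sets meet, which forces $v\in B_{k+1}(u)$. For fixed $k$ the expected number of such pairs is $n\cdot\mean|B_{k+1}(V_0)|=O(n)$, as the neighbourhood sizes have bounded mean under the Poisson-type branching, and each summand is bounded by $1$; hence $\var(\mnt_k(K_n))=O(1/n)$. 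Chebyshev's inequality then gives $\mnt_k(K_n)\to\mnt_k$ in probability for every $k$, and along any subsequence $n_j$ with $\sum n_j^{-1}<\infty$ the bound $\pr\{|\mnt_k(K_{n_j})-\mean\mnt_k(K_{n_j})|>\ai\}=O(1/n_j)$ together with the Borel-Cantelli lemma, applied to each $k$ and intersected over the countably many $k$, upgrades this to \as convergence of all moments. As every eigenvalue of $K_n$ lies in $[-1,1]$, the sequence $(\mnt_k)_{k\ge0}$ determines a unique probability measure $\mu$ on $[-1,1]$ (Hausdorff), and convergence of all moments (in probability, resp.\ \as) on this fixed compact support forces $\esd{K_n}\to\mu$ weakly in probability (resp.\ \as) by a standard Weierstrass approximation, proving the stated convergence. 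I expect this variance estimate to be the main obstacle, since it requires quantifying the rarity of overlapping $k$-neighbourhoods and the exact independence it yields.

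Finally, symmetry and nondegeneracy are read off from $T$. Being a tree, $T$ is bipartite and so carries no closed walk of odd length; hence $r_{2j+1}(T,\Empty,c)=0$ and every odd moment $\mnt_{2j+1}$ vanishes. A compactly supported measure all of whose odd moments vanish equals its reflection about $0$, so $\mu$ is symmetric. For nondegeneracy it suffices that $\mnt_2>0$: writing $d=d(\Empty,T)$, one has $r_2(T,\Empty,c)=\sum_{v\in N(\Empty)}[(c+d)(c+d(v,T))]^{-1}$, which is strictly positive whenever $\Empty$ has at least one child; since that event has probability $1-e^{-p}>0$, $\mnt_2=\mean r_2(T,\Empty,c)>0$ and $\mu\ne\delta_0$.
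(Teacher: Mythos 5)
Your proposal follows the same route as the paper: the identity $(K_n^k)_{ss}=r_k(G,s,c)$, local weak convergence of $\drg(n,p/n)$ to the $\dpois(p)$ Galton--Watson tree for the expected moments, a variance bound of order $1/n$ for concentration, Borel--Cantelli along subsequences with $\sum n_j^{-1}<\infty$, and the Weierstrass/moment argument on the compact support $[-1,1]$. The symmetry argument (odd moments vanish on trees) is also the paper's, and your explicit check that $\mnt_2=\mean r_2(T,\Empty,c)>0$ is a welcome addition, since the paper leaves nondegeneracy implicit until the later computation of $\ess\sup\esd\infty$.

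The one step that does not hold up as written is the covariance bound, which you yourself identify as the crux. The assertion that $\cov(r_k(G_n,u,c),r_k(G_n,v,c))$ ``vanishes unless the two edge sets meet'' conflates a deterministic quantity with a random event: the balls $B_{\lfloor k/2\rfloor}(u)$ and $B_{\lfloor k/2\rfloor}(v)$ are themselves functions of the graph, so $\xi_u:=r_k(G_n,u,c)$ and $\xi_v$ are not functions of two disjoint, independent families of edge indicators, and their (unconditional) covariance is a single number that cannot be ``switched off'' pointwise on the disjointness event. Likewise, $n^{-2}\sum_{u\ne v}\cov(\xi_u,\xi_v)$ equals $(1-n^{-1})\cov(\xi_1,\xi_2)$ by exchangeability, so bounding the sum by the expected number of overlapping pairs is not a substitute for actually proving $\cov(\xi_1,\xi_2)=O_k(1/n)$. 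The paper supplies exactly the missing ingredient: let $S$ be the indicator that vertices $1$ and $2$ are at graph distance $>2k$ and that both $k$-balls have at most $n/2$ vertices; then $\pr\{S=0\}=O_k(1/n)$ (a union bound over paths of length $\le 2k$ together with the branching bound $\mean|G_{k,s}|\le 1+p+\dots+p^k$), conditional on $S=1$ the two rooted balls are \iid with a \emph{conditioned} marginal (which is precisely why one cannot simply write $\mean[\xi_1\xi_2\mathbf{1}\{S=1\}]=\mean\xi_1\,\mean\xi_2\,\pr\{S=1\}$), and the law of total covariance then converts the conditional independence into $\cov(\xi_1,\xi_2)=O_k(1/n)$. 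With that decomposition inserted in place of your heuristic, the proof is complete and coincides with the paper's.
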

Note that for any tree $T$, if $k$ is odd, then $r_k(T, \Empty,c)=0$
and hence $\beta_k=0$.  This immediately leads to the symmetry of the 
limiting distribution.

\begin{proof}
  Put $K_n = (x_{ij})$.  Denote by $C_{k,n}$ the set of closed paths
  of length $k$ on $\{1,\ldots, n\}$.  For $\vf i\in C_{k,n}$, denote
  $x(\vf i) = x_{i_1 i_2} x_{i_2 i_3} \cdots x_{i_{k-1} i_k} x_{i_k
    i_1}$.  Then for $k\ge 1$ and $s=1,\ldots,n$, the $s\th$ diagonal
  entry of $K_n^k$ is
  \begin{align*}
    (K^k_n)_{ss}
    =
    \sum_{\vf i\in C_{k,n},\ i_1=i_k=s} x(\vf i).
  \end{align*}
  Since $x(\vf i)$ is the probability that the random walk is alive
  after traversing the closed path $\vf i$,
  \begin{align*}
    (K^k_n)_{ss}
    =
    r_k(G, s, c), \quad s=1,\ldots, n.
  \end{align*}
  For a random walk on $G$ that starts from $s$, if it returns to $s$
  at step $k$, then the vertices it visits by then each has at most
  distance $k-1$ from $s$, and so the neighbors of each such vertex
  has at most distance $k$ from $s$.  Denote by $G_{k, s}$ the
  subgraph of $G$ whose vertex set consists of vertices with distance
  from $s$ no greater than $k$ and whose edge set consists of edges in
  $G$ connecting these vertices.  Then $r_k(G,
  s,c) = r_k(G_{k,s}, s, c)$.  It is well known that, given $s$, as
  $n\toi$, $G$ rooted with $s$ converges locally to $T$ in
  distribution.  This means that for any $k$, $G_{k, s}$ rooted with
  $s$ converges in distribution to $T_k$, the subtree of $T$
  consisting of $\Empty$ and its first $k$ generations of descendants;
  see for example \cite {bordenave:11b:ap}.  As a result,
  $r_k(G_{k,1}, 1,c)\to r_k(T_k, \Empty,c) = r_k(T, \Empty, c)$ in
  distribution.  By the above displays, $\mnt_k(K_n) = n^{-1}
  \sum_{s=1}^n r_k(G, s, c)$.   Then by exchangeability and dominated
  convergence, $\mean\mnt_k(K_n) = \mean r_k(G, 1, c) = \mean
  r_k(G_{k,1}, 1, c) \to \mean r_k(T, \Empty, c)$.

  We need to show that $\mnt_k(K_n)\to \mnt_k$ in probability as
  $n\toi$, and \as if $n$ is replaced with $n_j\toi$ such that $\sum
  n^{-1}_j < \infty$.  Put $\xi_s=r_k(G,s,c)$.  By exchangeability,
  \begin{align} \label{e:var-mnt}
    \var[\mnt_k(K_n)]
    &=
    n^{-1} \var(\xi_1)
    + 2(1-n^{-1})
    \cov(\xi_1, \xi_2)
    \le
    n^{-1} + 2|\cov(\xi_1,\xi_2)|.
  \end{align}
  Let $S_1 = \cf{\text{distance between 1 and 2 in  $G$ is $>2k$}}$.
  Then
  \begin{align*}
    \pr\{S_1=0\}
    &\le
    \sum_{l=0}^{2k-1}
    \pr\{\text{$\exists\eno i {2k-1}$ s.t.\  $\{i_t,i_{t+1}\}\in
      E(G)$, $0\le t<2k$, with $i_0=1$, $i_{2k}=2$}\} 
    \\
    &\le
    \sum_{l=0}^{2k-1} n^{2k-1} (p/n)^{2k} = O_k(1/n),
  \end{align*}
  where $O_k(\cdot)$ denotes that the implicit constant depends only
  on $k$ in addition to the fixed $p$ and $c$.  Note that when
  $S_1=1$, $G_{k,1}$ and $G_{k,2}$ are disjoint.  Let $S_2 =
  \cf{|G_{k, s}|\le n/2, s=1,2}$.  Denote by $\mathrm{dis}(u,v) =$
  distance between $u$ and $v$ in $G$.  By $|G_{0,s}|=|\{u:
  \mathrm{dis}(s,u)=0\}|=1$,
  \begin{align*}
    \mean|\{u: \mathrm{dis}(s,u)=k\}|
    \le
    \mean\Sbr{
      \sum_{v: \mathrm{dis}(s,v)=k-1} \sum_{u=1}^n \cf{\{u,v\}\in
        E(G)}
    }
    \le 
    p \mean|\{u: \mathrm{dis}(s,u)=k-1\}|,
  \end{align*}
  and induction, $\mean|G_{k,s}|\le 1+ p + \cdots + p^k = O_k(1)$.
  Then by Markov inequality, $\pr\{S_2=0\} = O_k(1/n)$.  Let $S=S_1
  S_2$.  Then $\pr\{S=0\}=O_k(1/n)$.  Conditioning on $S=1$, $[(G_{k,
    1}, 1)]$ and $[(G_{k,2},2)]$ are \iid$\sim [(G_{k,1},1)]$
  conditioning on $|G_{k,1}|\le n/2$.  Since for $s=1, 2$, $\xi_s =
  r_k(G_{k,s}, s,c)$ only depends on $[(G_{k,s}, s)]$, $\cov(\xi_1,
  \xi_2\gv S=1)=0$.  By exchangeability $\mean(\xi_1\gv S) =
  \mean(\xi_2\gv S)$, denoted by $h_S$.  Then
  \begin{align*}
    \cov(\xi_1, \xi_2)
    &=
    \mean[\cov(\xi_1, \xi_2\gv S)] + \cov(\mean(\xi_1\gv S),
    \mean(\xi_2\gv S))
    \\
    &=
    \cov(\xi_1,\xi_2\gv S=0)\pr\{S=0\} + \var(h_S)
    \\
    &=
    O_k(1/n) + (h_1 - h_0)^2 \pr\{S=0\}
    \pr\{S=1\} = O_k(1/n),
  \end{align*}
  so by \eqref{e:var-mnt}, $\var[\mnt_k(K_n)] = O_k(n^{-1})$.  This
  implies that $\mnt_k(K_n) - \mean[\mnt_k(K_n)]\to 0$ in probability,
  so $\mnt_k(K_n)\to \mnt_k$ in probability.  Moreover, for $n_j\toi$
  with $\sum n^{-1}_j<\infty$, by Borel-Cantelli lemma,
  $\mnt_k(K_{n_j}) - \mean[\mnt_k(K_{n_j})]\to 0$ \as, giving
  $\mnt_k(K_{n_j}) \to\mnt_k$ \as.

  Since the entries of $K_n$ are nonnegative with row sums no greater
  than 1, $\sr(D^{-1}_n A_n)\le 1$ and hence $\esd{K_n}$ is supported
  in $[-1,1]$.  Meanwhile, by Weierstrass theorem, polynomials are
  dense in $C([-1, 1])$.  Then by the convergence in probability of
  $\mnt_k(K_n)$ and standard results on weak convergence (\cite
  {breiman:92:siam}, Section 8.4), $\esd{K_n}$ weakly converges in
  probability to a probability distribution with support in $[-1,1]$
  and moments $\mnt_k$.  Finally, for any $n_j\toi$ with $\sum
  n^{-1}_j<\infty$, the \as weak convergence of the ESD of $K_{n_j}$ 
  follows from the \as convergence of $\mnt_k(K_{n_j})$.
\end{proof}

\section{Essential supremum of the limit of ESD} \label{s:esssup}
Let $\esd\infty$ be the limiting distribution of $\esd{K_n}$, where
again $K_n = D^{-1}_n A_n$.  The main result of this section is the
following.
\begin{theorem} \label{t:e-sup}
  For any $p>0$, $\ess\sup\esd\infty = \tau_c$.
\end{theorem}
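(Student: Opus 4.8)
The plan is to read off $\ess\sup\esd\infty$ from the even moments of $\esd\infty$. Since, by Theorem~\ref{t:esd}, $\esd\infty$ is symmetric and supported in $[-1,1]$, a standard fact gives
\begin{align*}
  \ess\sup\esd\infty = \lim_{k\toi}\mnt_{2k}^{1/(2k)};
\end{align*}
indeed $\mnt_{2k}\le(\ess\sup\esd\infty)^{2k}$ trivially, while $\esd\infty\Cbr{|x|>\ess\sup\esd\infty-\ai}>0$ for each $\ai>0$ (by symmetry) yields the reverse. By Theorem~\ref{t:esd}, $\mnt_{2k}=\mean r_{2k}(T,\Empty,c)$ with $T$ the $\dpois(p)$ Galton--Watson tree, and $r_{2k}(T,\Empty,c)$ depends only on the first $2k$ generations $T_{2k}$. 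For a finite rooted tree $(H,v)$ with $|H|>1$, let $B_H=D^{-1/2}A_H D^{-1/2}$ be the symmetrization of $K_H$; since $K_H=D^{-1/2}B_H D^{1/2}$, conjugation by the diagonal $D^{\pm1/2}$ preserves diagonal entries of powers, so $r_m(H,v,c)=(K_H^m)_{vv}=(B_H^m)_{vv}=\sum_j \lambda_j^{m}(w_j)_v^2$, where $(\lambda_j,w_j)$ are the eigenpairs of $B_H$ with $w_j$ orthonormal. (If $v$ has no neighbour the walk dies at step~$1$ and $r_m=0$ for $m\ge1$, so that case is trivial.)

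For the upper bound I apply this with $m=2k$ and $H=T_{2k}$: every summand $\lambda_j^{2k}(w_j)_v^2$ is nonnegative and bounded by $\sr(B_{T_{2k}})^{2k}(w_j)_v^2$, and $\sum_j(w_j)_v^2=1$, so
\begin{align*}
  r_{2k}(T,\Empty,c)\le \sr(B_{T_{2k}})^{2k}=\sr(K_{T_{2k}})^{2k}<\tau_c^{2k}
\end{align*}
by Lemma~\ref{l:SR-tree}, as $T_{2k}$ is a tree. Taking expectations gives $\mnt_{2k}\le\tau_c^{2k}$, hence $\ess\sup\esd\infty\le\tau_c$.

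For the reverse inequality, fix any finite rooted tree $(H,v)$. Because $\dpois(p)$ charges every nonnegative integer, $(T,\Empty)$ is rooted-isomorphic to $(H,v)$ with probability $\rho_H>0$, and on that event $r_{2k}(T,\Empty,c)=(B_H^{2k})_{vv}\ge q_H^{2k}(w_\star)_v^2$, where $q_H=\sr(K_H)>0$ is the Perron root of $B_H$ and $w_\star>0$ is its normalized Perron eigenvector from \eqref{e:PF}, so $(w_\star)_v^2>0$. Thus $\mnt_{2k}\ge \rho_H(w_\star)_v^2\,q_H^{2k}$, and letting $k\toi$ gives $\ess\sup\esd\infty\ge q_H=\sr(K_H)$. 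It remains to make $\sr(K_H)$ approach $\tau_c$: I take $H=S_d$, the star with centre $v$ and $d$ leaves. Solving \eqref{e:PF} directly (the leaves being interchangeable) yields $\sr(K_{S_d})=\sqrt{d/[(c+d)(c+1)]}$, which increases to $\tau_c$ as $d\toi$. Hence $\ess\sup\esd\infty\ge\tau_c$, and together with the upper bound the theorem follows.

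The only real subtlety is the choice of the witnessing trees. As the remark following Lemma~\ref{l:SR-tree} shows, trees of bounded degree keep $\sr(K_H)$ bounded away from $\tau_c$, so a family with unbounded degree is essential; the stars $S_d$ supply it with an explicit spectral radius, and the Perron eigenvector furnishes the $k$-independent constant $(w_\star)_v^2$ that makes the $2k$-th root converge to $\sr(K_{S_d})$. Everything else is bookkeeping.
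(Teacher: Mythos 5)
Your proposal is correct and follows essentially the same route as the paper: both obtain the upper bound from $\mnt_{2k}=\mean (K^{2k}_{T_{j}})_{\Empty\Empty}$ together with the symmetrization $K_G=D_b^{-1}B D_b$ and Lemma~\ref{l:SR-tree}, and both obtain the lower bound by letting the Galton--Watson tree realize a star with $d$ leaves, whose relevant spectral quantity $\sqrt{d/[(c+d)(c+1)]}$ increases to $\tau_c$. The only cosmetic difference is that you reach the lower bound through the Perron term of the spectral decomposition of $B_{S_d}$, whereas the paper computes the return probability $r_{2m}(S_d,\Empty,c)=[d/(c+d)]^m[1/(c+1)]^m$ directly; the two give the same constant-times-$q_H^{2k}$ bound.
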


Thus, for fixed $c>0$, as $p\toi$, $\ess\sup\esd\infty$ does not
vanish.  This may be compared to the case where the underlying random
graph follows $\drg_{n,d}$.  By \cite{mckay:81:laa}, the corresponding
essential supremum is $2\sqrt{d-1}/(c+d)$ so, given $c>0$, it tends to
0 as $d\toi$.

\begin{proof}[Proof of Theorem \ref{t:e-sup}]
  Given $s\ge 1$, the probability $q_s$ that $T$ is a tree on
  $\{\Empty, \eno v s\}$ with $E(T) = \{\{\Empty, v_i\},
  i=1,\ldots, s\}$ is positive.  For this $T$ and $k=2m$, it is easy
  to get $r_k(T,\Empty,c) = [s/(c+s)]^m [1/(c+1)]^m$.  Then by Theorem
  \ref {t:esd}, $\mnt_k \ge q_s [s/(c+s)]^m [1/(c+1)]^m$, yielding
  $\ess\sup\esd\infty \ge \sqrt{s/(c+s)} \tau_c$.  Letting $s\toi$
  then gives $\ess\sup\esd\infty\ge \tau_c$.

  To show $\ess\sup\esd\infty \le \tau_c$, it suffices to consider
  $c>0$.  For $k=2m$, arguing as in the proof of Theorem \ref
  {t:esd}, $\mnt_k = \mean r_k(T, \Empty, c) = \mean r_k(T_{m+1},
  \Empty, c) = \mean (K^k_{T_{m + 1}})_{\Empty \Empty}$.  We claim
  that for any finite graph $G$, 
  $v\in G$, and $k\ge 1$,
  \begin{align} \label{e:power-K}
    |(K^k_G)_{vv}| \le \sr(K_G)^k.
  \end{align}
  Together with Lemma \ref{l:SR-tree}, this implies
  $|(K^k_{T_{m+1}})_{\Empty\Empty}|\le \tau^k_c$.  As a result
  $\mnt_k\le \tau^k_c$ and hence $\ess\sup\esd\infty\le\tau_c$, which
  completes the proof.

  To prove \eqref{e:power-K}, suppose $V(G) = \{1,\ldots, n\}$.  Then
  $K_G = D^{-1}_a A_G$, where $a = (\eno a n)' = c 1_n + A_G 1_n$.
  Let $b = (\sqrt{a_1}, \ldots, \sqrt{a_n})'$.  Then $K^k_G = D^{-1}_b
  B^k D_b$, where $B = D^{-1}_b A_G D^{-1}_b$.  Since $D_b$ is
  diagonal, $(K^k_G)_{ii}=(B^k)_{ii}$, $i\le n$.  Since $B$ is
  symmetric, so is $B^k$.  In general, for any symmetric real-valued
  matrix $H$, since $\sr(H) \pm H$ is nonnegative definite, $\max_i
  |H_{ii}| \le \sr(H)$.  Thus $|(K^k_G)_{ii}| \le \sr(B^k) = \sr(B)^k
  = \sr(K_G)^k$, as claimed.
\end{proof}

We now can prove Theorem \ref{t:gap}.   Without loss of generality,
let $b\in (0, \tau_c)$ be a continuity point of the distribution
function of $\esd\infty$.  Then by Theorems \ref{t:esd} -- \ref
{t:e-sup}, $\esd{K_n}(J) \to\esd\infty(b,\infty) > 0$ in probability
for $J = (-\infty, -b)$, $(b, \infty)$, and so for any $l\ge 1$,
$\pr\{\ev_l(K_n) \le -b$ and $\ev_{n-l+1}(K_n) \ge b \} \to 1$.
Together with Proposition \ref{p:upper}, this yields
\begin{align*}
  \pr\{b\le \ev_{n-l+1}(K_n)\le \tau_{c/k} \text{
    and } -\tau_{c/k} \le \ev_l(K_n) \le -b\}\to 1.
\end{align*}
Since the convergence holds for all $l\ge 1$, then by Weyl's
inequality \eqref {e:interlacing}, the proof is complete.

\begin{figure}[t]
  \begin{center}
    \begin{tabular}{cc}
      \scalebox{.4}{\includegraphics{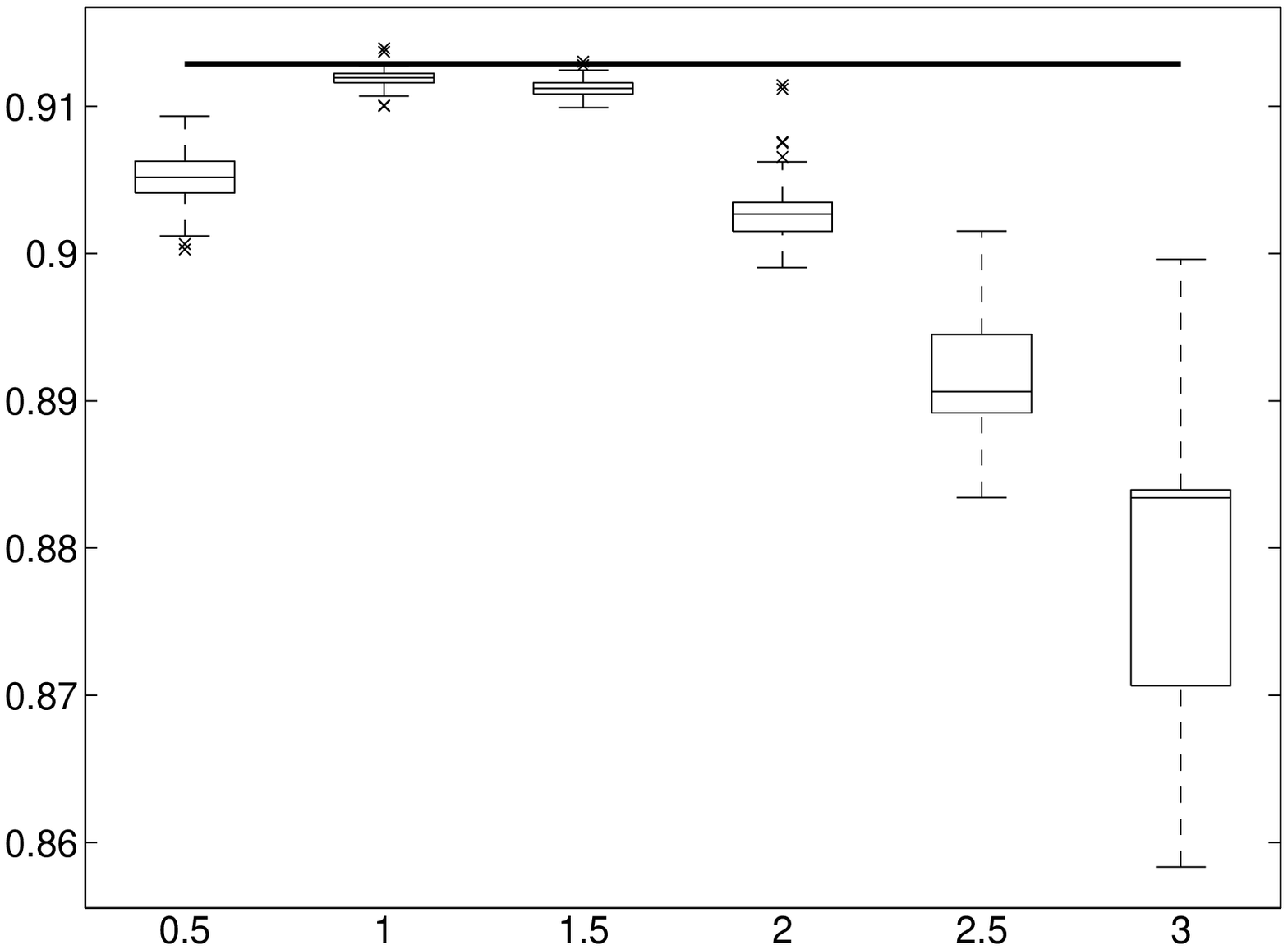}} &
      \scalebox{.4}{\includegraphics{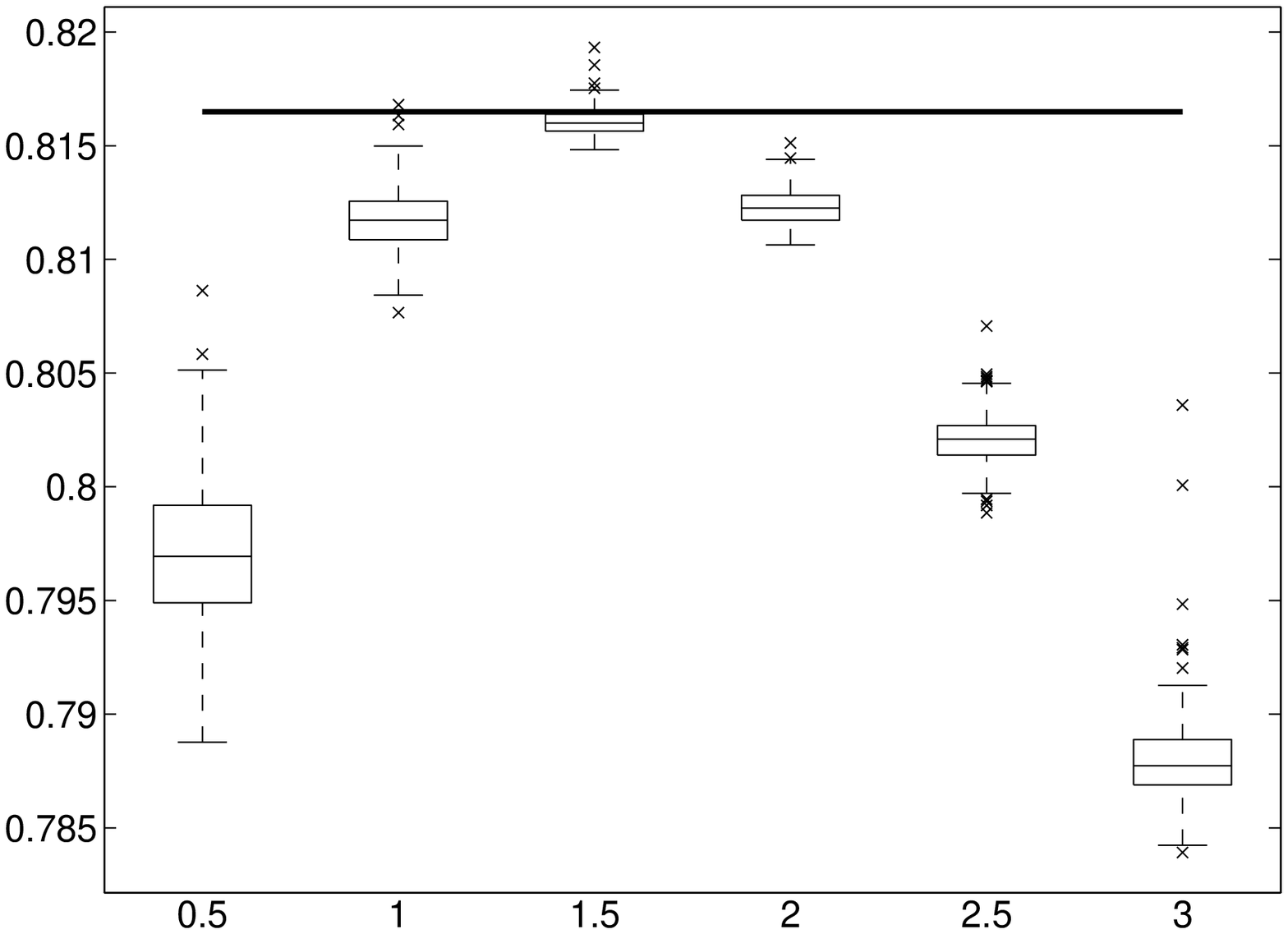}} \\[-4em]
      \hspace{3ex}\framebox[2cm]{$c=0.2$} &
      \hspace{3ex}\framebox[2cm]{$c=0.5$}
    \end{tabular}
    \\[3em]
    \begin{tabular}{c}
      \scalebox{.4}{\includegraphics{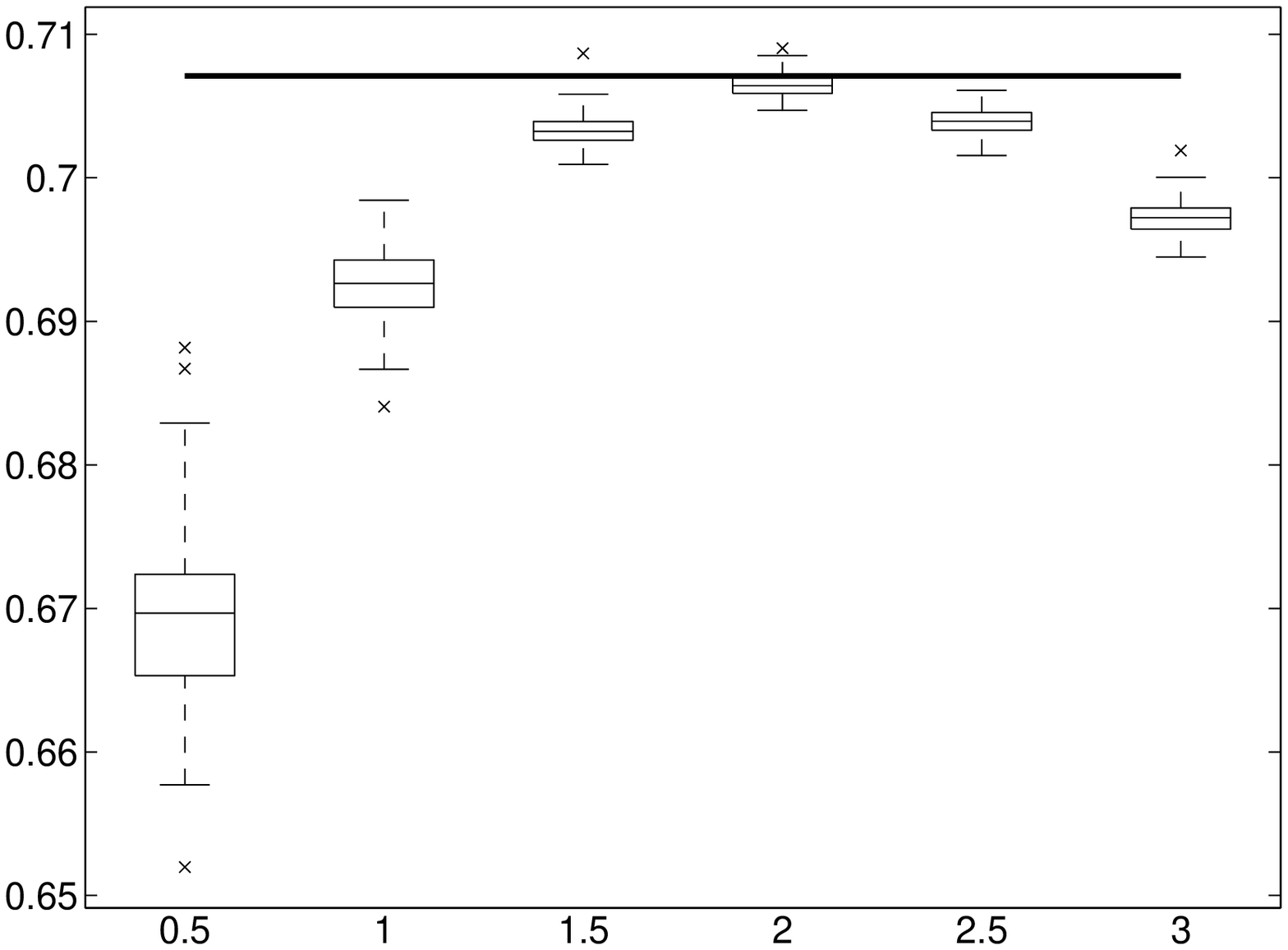}} \\[-4em]
      \hspace{3ex}\framebox[2cm]{$c=1$}\\[2em]
    \end{tabular}
  \end{center}
  \caption{\label{f:simul}
    Boxplots of randomly sampled $\sev(M_n)$.}
\end{figure}

We conducted a simulation study to examine the tightness of the bounds
in Theorem \ref{t:gap}.  Given $c$, for each $p\in \{0.5, 1, 1.5, 2,
2.5, 3\}$, we used MATLAB function \texttt{eig} to calculate
$\sev(M_n)$ for 200 randomly sampled $M_n$ with $n=4000$.  In each
panel of Figure \ref{f:simul}, the boxplot of the sample values of
$\sev(M_n)$ is shown as a function $p$.  On each box, the central mark
is the sample median, the edges of the box are the 1st and 3rd sample
quartiles, the whiskers extend to the most extreme sample values
considered by MATLAB to be non-outliers, and the outliers are plotted
individually as ``x''.  The $y$-coordinate of the long horizon line
extending from $p=0.5$ to 3 equals $\tau_c$.  As Figure \ref{f:simul}
shows, for $p=0.5$ and most of $p\ge 1$, even when $n = 4000$,
$\sev(M_n)$ is still quite below $\tau_c$.  Since $\sev(M_n)$ is
asymptotically lower bounded by $\tau_c$ according to Theorem \ref
{t:e-sup}, this suggests that its convergence is slow.  The plots also
indicate that for different values of $c$, there are different values
of $p$ for which the convergence is fastest in terms of how fast
$\sev(M_n)$ approaches or goes above $\tau_c$ and how fast its 
variation decreases.  Among all the pairs of $c$ and $p$, only $(c, p)
= (0.5, 1.5)$ and $(1, 2)$ generated a significant number of
$\sev(M_n)$ that were no less than $\tau_c$, with the fraction of such
$\sev(M_n)$ equal to 22\% and 20\%, respectively.  When $n$ was
increased to 6000, the fraction changed to 18\% and 22\%,
respectively.  However, the differences in fraction are not
statistically significant.  To see if the relatively high fractions
were due to fluctuations of the bulk of the eigenvalues, we counted
the total number of eigenvalues with absolute values no less than
$\tau_c$.  When $n=4000$, for each pair, there were $200\times 4000 =
8\times 10^5$ absolute eigenvalues in total.  Only 246 and 243 of
them, respectively, were no less than $\tau_c$.  When $n=6000$, the
counts changed to 239 and 246, respectively.  Thus, the fluctuation of
the bulk had little to do with the relatively high percentages of
$\sev(M_n)$ greater than $\tau_c$.

We were unable to go beyond $n=6000$ due
to limited computing capacity.  Nevertheless, the numerical results
suggest that for $p>1$, $\tau_c$ is not a tight lower bound, at least
in the probabilistic sense that there is $t'>\tau_c$, such that
$\pr\{\sev(M_n)\ge t'\}\not\to 0$.  The numerical results also
suggest, more convincingly, that the upper bound in Theorem \ref
{t:gap} is far from being tight, especially for large $p$.  It would
be interesting to see whether $\sev(M_n)$ has a nonrandom limit or
weakly converges to a nondegenerate distribution and in either case,
at what rate of convergence.

\section{A formula for moments of the limit of ESD} \label{s:mnt}

This section gives a more explicit formula for the moments
$\mnt_k$ of the limiting distribution $\esd\infty$.  To state the
result, if $\vf v$ is a path on $I_n:=\{1,\ldots, n\}$, denote by
$[\vf v]$ the graph whose vertex set consists of the distinct elements
among $v_i$, and whose edge set consists of the distinct unordered
pairs among $\{v_i, v_{i+ 1}\}$, $1\le i\le k$.  Denote by $C_{k,n}$
the set of closed paths of length $k$ on $I_n$.  Denote by
$\cnt(\cdot, \vf v)$ the number of times an object appears in $\vf v$.
Thus, for $u, u'\in I$ and $e=\{u, u'\}$, $\cnt(u, \vf v) =
\sum_{i=1}^{k+1} \cf{v_i = u}$ and $\cnt(e, \vf v) = \sum_{i=1}^k
\cf{\{v_i, v_{i+1}\}   = e}$.  Also, denote $\cnt_+(u,\vf
v)=\sum_{x\in I} \cnt((u,x),\vf v)$, i.e., the number of directed
edges in $\vf v$ starting at $u$.  Denote by $E_n$ the set of edges
of the complete graph on $I_n$.  Following the definition on p.~17 of
\cite{bai:10:sg-ny}, a path $\vf i \in C_{k,n}$ is called canonical if
$i_1=1$ and $i_j \le \max(\eno i {j-1})+1$ for $2\le j\le k$.  For
such a path $\vf i$, if $|[\vf i]|=t$, then the set of distinct values
of $i_j$ is $\{1, \dots, t\}$.  Let
\begin{align*}
  \Gamma_{k,t}
  =
  \{\vf i\in C_{k,n}: \vf i\text{ is canonical},
  \ [\vf i] \text{ is a tree of size } t\}.
\end{align*}
As long as $n\ge t$, the definition is independent of $n$.  Note that
for $\vf i\in C_{k,n}$, $[\vf i]$ is a tree $\Iff |[\vf i]| = e([\vf
i])+1$, and when this is the case, each $e\in E([\vf i])$ is traversed
by $\vf i$ on both directions the same number of times, and hence $k$
is even.  Therefore, $\Gamma_{k,t}=\emptyset$ if $k$ is odd.

\begin{prop} \label{p:esd-mnt}
  Let $c>0$.  Then for even $k=2m$,
  \begin{align*}
    \mnt_k=
    \sum_{t=2}^{m+1} p^{t-1}\sum_{\vf i\in \Gamma_{k,t}}
    \prod_{a=1}^t
    \mean\Sbr{
      (c+ d(a, [\vf i])+\xi)^{-\cnt_+(a,\vf i)}
    }, \quad\xi\sim \dpois(p).
  \end{align*}
\end{prop}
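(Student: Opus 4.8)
The plan is to prove the formula by the moment method, starting from the identity $\mnt_k=\lim_{n\toi}\mean[\mnt_k(K_n)]$, which is available from the proof of Theorem \ref{t:esd} (there $\mnt_k(K_n)\to\mnt_k$ in probability while $\esd{K_n}$ is supported in $[-1,1]$, so the bounded $k\th$ moments converge in mean). Writing $K_n=(x_{ij})$ with $x_{ij}=(A_n)_{ij}/(c+d(i,G))$ and $x(\vf i)=x_{i_1 i_2}\cdots x_{i_k i_1}$, I have $\mnt_k(K_n)=n^{-1}\tr(K_n^k)=n^{-1}\sum_{\vf i\in C_{k,n}} x(\vf i)$. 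First I would factor each term as
\begin{align*}
  x(\vf i)=\Grp{\prod_{\{u,v\}\in E([\vf i])} (A_n)_{uv}}
  \prod_{a\in[\vf i]} (c+d(a,G))^{-\cnt_+(a,\vf i)},
\end{align*}
using that $(A_n)_{uv}\in\{0,1\}$ makes the edge part insensitive to how many times $\vf i$ traverses a given edge, and that grouping the $k$ denominator factors by their source vertex gives the exponent $\cnt_+(a,\vf i)$ on $(c+d(a,G))^{-1}$.

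Next I would organize the sum by isomorphism type. By the vertex-exchangeability of $\drg(n,p/n)$, $\mean[x(\vf i)]$ depends on $\vf i$ only through the rooted-path isomorphism class, whose representatives are exactly the canonical paths. Each canonical path $\vf i$ with $|[\vf i]|=t$ is the image, under relabeling vertices in order of first appearance, of exactly $n(n-1)\cdots(n-t+1)$ paths in $C_{k,n}$, all sharing the same $\mean[x(\vf i)]$. Hence
\begin{align*}
  \mean[\mnt_k(K_n)]
  =\frac1n\sum_{t} n(n-1)\cdots(n-t+1)
  \sum_{\substack{\vf i\ \text{canonical}\\ |[\vf i]|=t}}\mean[x(\vf i)],
\end{align*}
a finite sum for each fixed $k$, so it suffices to take the limit term by term.

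The heart of the argument is the asymptotics of $\mean[x(\vf i)]$ for a fixed canonical $\vf i$ tracing a graph $H=[\vf i]$ on $t$ vertices. Since the edges of $\drg(n,p/n)$ are independent,
\begin{align*}
  \mean[x(\vf i)]
  =(p/n)^{e(H)}\,
  \mean\Sbr{\prod_{a\in H}(c+d(a,G))^{-\cnt_+(a,\vf i)}
    \ \Big|\ \text{all edges of } H \text{ present}}.
\end{align*}
Under the conditioning I would write $d(a,G)=d(a,H)+B_a$, where $B_a$ counts the edges incident to $a$ that are not in $E(H)$. The $n-t$ potential edges from $a$ to $V(G)\setminus V(H)$ are independent $\dbern(p/n)$ and, jointly over $a\in H$, converge to independent $\dpois(p)$ variables $\xi_a$ (for distinct $a$ the relevant potential edges form disjoint sets); the remaining $O_k(1)$ potential edges inside $V(H)$ are absent with probability $1-O_k(1/n)$. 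Since $c>0$ keeps every factor in $[0,c^{-k}]$, bounded convergence gives
\begin{align*}
  \mean[x(\vf i)]
  =(p/n)^{e(H)}\Grp{\prod_{a=1}^t
    \mean\Sbr{(c+d(a,H)+\xi)^{-\cnt_+(a,\vf i)}}+o(1)},
  \quad\xi\sim\dpois(p).
\end{align*}
I expect this degree-decomposition step---establishing the joint convergence to independent Poisson variables while discarding the internal non-edges---to be the main obstacle; the remaining counting is routine bookkeeping.

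Finally I would assemble the pieces. Combining the last two displays,
\begin{align*}
  \frac1n\,n(n-1)\cdots(n-t+1)\,(p/n)^{e(H)}
  =p^{e(H)}\,n^{\,t-1-e(H)}(1+o(1)).
\end{align*}
Because $H$ is connected, $e(H)\ge t-1$, with equality iff $H$ is a tree. Hence every canonical path whose $[\vf i]$ contains a cycle contributes $O_k(1/n)\to0$, while a tree contributes the finite factor $p^{t-1}$. The surviving canonical paths are precisely those in $\Gamma_{k,t}$, and $t$ ranges over $2,\ldots,m+1$, since a closed walk of length $k=2m$ on a tree traverses each of its $t-1$ edges an even, hence positive, number of times, forcing $2(t-1)\le 2m$. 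Collecting the surviving terms yields
\begin{align*}
  \mnt_k=\sum_{t=2}^{m+1} p^{t-1}
  \sum_{\vf i\in\Gamma_{k,t}}\prod_{a=1}^t
  \mean\Sbr{(c+d(a,[\vf i])+\xi)^{-\cnt_+(a,\vf i)}},
  \quad\xi\sim\dpois(p),
\end{align*}
as claimed.
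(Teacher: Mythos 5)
Your proposal is correct and follows essentially the same route as the paper's proof: expand $\tr(K_n^k)$ over closed paths, reduce to canonical representatives by exchangeability (with multiplicity $n!/(n-t)!$), kill the non-tree paths via $e([\vf i])\ge |[\vf i]|-1$, and pass to the limit by conditioning on the tree edges being present and letting the excess degrees converge to independent $\dpois(p)$ variables with dominated convergence. The only cosmetic difference is that you discard the non-tree canonical classes after relabeling while the paper bounds their aggregate contribution by $O_k(1)$ before relabeling; the substance is identical.
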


\begin{proof}
  Given $n$, write $A_n = (\ai_{i j}) \in \{0,1\}^{n\times n}$.  For
  $e=\{i,j\}\in  E_n$, denote $\ai_e =\ai_{i j} = \ai_{j i}$.  Put
  \begin{align*}
    w_i = c + \sum_{j=1}^n \ai_{i j},\quad x_{i j} = \ai_{i j}/w_i.
  \end{align*}
  For $(y_{i j})\in\Reals^{n\times n}$ and $(z_i) \in  \Reals^n$, and
  for $\vf i\in C_{k,n}$, denote $y(\vf i) = y_{i_1 i_2} y_{i_2 i_3}
  \cdots y_{i_{k-1} i_k} y_{i_k i_1}$ and $z(\vf i) = z_{i_1} \cdots
  z_{i_k}$.  Then from the proof of Theorem \ref {t:esd},
  \begin{align} \label{e:trace-path}
    \mean[\tr(K^k_n)]
    =
    \sum_{\vf i\in C_{k,n}} \mean\Sbr{x(\vf i)}.
  \end{align}
  Given $\vf i\in C_{k,n}$, let $t=|[\vf i]|$ and $s = e([\vf i])$.
  Since $\ai(\vf i)= \prod_{e\in E([\vf i])}\ai_e^{\cnt(e, \vf i)}$
  with all $\cnt(e, \vf i)\ge 1$, $x(\vf i) = \ai(\vf i)/w(\vf
  i)\ne 0\Iff\ai_e=1$ for all $e\in E([\vf i])\Iff \ai(\vf i)=1$.  As
  $\ai_e$, $e\in E([\vf i])$, are \iid$\sim\dbern(p/n)$, $\pr\{\ai(\vf
  i)=1\}=(p/n)^s$.  For $j\le k$, $w_{i_j} \ge c+\ai_{i_j i_{j+1}}$
  and $\{i_j, i_{j+1}\} \in E([\vf i])$.  Consequently, $\ai(\vf i)=1$
  implies $w_{i_j}\ge  c+1$ for all $j$.  As a result, 
  \begin{align*}
    \mean[x(\vf i)]
    =
    \mean\Sbr{\lfrac{\cf{\ai(\vf i)=1}}{w(\vf i)}}
    \le
    (c+1)^{-k} \pr\{\ai(\vf i)=1\}
    =
    (c+1)^{-k} (p/n)^s.
  \end{align*}
  Since $[\vf i]$ is connected, $1\le e([\vf i])\le k$ and $2\le |[\vf
  i]|\le e([\vf i])+1$.  For $2\le t\le k$, the number of $\vf i\in
  C_{k,n}$ with $|[\vf i]|=t$ is less than $\binom{n}{t} t^k$.  As a
  result, for $n\ge 2$,
  \begin{align*}
    \sum_{\vf i\in C_{k,n}:\; |[\vf i]|\le e([\vf i])}
    \mean\Sbr{x(\vf i)}
    &=
    \sum_{s=1}^k \sum_{t=2}^s
    \sum_{\vf i\in C_{k,n}:\; |[\vf i]|=t,\, e([\vf i])=s}
    \mean\Sbr{x(\vf i)}
    \nonumber\\
    &\le
    \sum_{s=1}^k \sum_{t=1}^s n^t t^k (c+1)^{-k}(p/n)^s
    \nonumber \\
    &\le
    (c+1)^{-k} \sum_{s=1}^k s^k p^s \sum_{t=1}^s n^{t-s}
    \le
    2 (c+1)^{-k} \sum_{s=1}^k s^k p^s.
  \end{align*}
  Since $p$ and $c$ are fixed, then by \eqref{e:trace-path}
  \begin{align} \label{e:trace}
    \mean[\tr(K^k_n)]
    =
    \sum_{\vf i\in C_{k,n}:\, |[\vf i]| = e([\vf i])+1}
    \mean\Sbr{x(\vf i)} + O_k(1), \quad
    \text{as}\ n\toi.
  \end{align}
  Let $\vf i$ be a path counted on the right hand side of \eqref
  {e:trace} and $|[\vf i]|=t$.  Then $2\le t\le m+1$ and
  \begin{align*}
    x(\vf i)
    =
    \frac{\ai(\vf i)}{w(\vf i)}
    =
    \prod_{a,b=1}^t
    \Grp{\frac{\ai_{ab}}{w_a}}^{\cnt((a,b),\vf i)}.
  \end{align*}
  Clearly $x(\vf i)$ is a deterministic function of $A_n$, denoted by
  $F(A_n)$.  Arrange the elements of $V([\vf i])$ as $\eno z t$ in the
  order of initial appearance in $\vf i$ and let $\sigma(z_l)=l$.
  Then $\sigma: V([\vf i])\to \{1,\ldots, t\}$ is the unique bijection
  such that $\sigma(\vf i) := (\sigma(i_1), \ldots, \sigma(i_k),
  \sigma(i_1))\in \Gamma_{k,t}$.  Extend $\sigma$ to a permutation of
  $\{1,\ldots, n\}$, still denoted $\sigma$.  Let $S = (s_{i j})\ismat
  n$ with $s_{i j} = \cf{i=\sigma(j)}$.  From $S' A_n S = (\tilde
  \ai_{i j})$ with $\tilde \ai_{i j} = \sum_{k l} s_{k i} \ai_{i j}
  s_{l j} = \ai_{\sigma(i) \sigma(j)}$, $x(\sigma(\vf i)) =
  \ai(\sigma(\vf i))/w(\sigma(\vf i)) = F(S' A_n S)$.  Since $A_n\sim
  S' A_n S$, then $x(\vf i)\sim x(\sigma(\vf i))$, in particular,
  $\mean[x(\vf i)] = \mean[x(\sigma(\vf i))]$.
  
  It is easy to see that for each $\vf i\in \Gamma_{k,t}$, there are
  exactly $n!/(n-t)!$ paths counted on the right hand side of
  \eqref{e:trace} that can be mapped in the above way to $\vf i$.  As
  a result, 
  \begin{align} \label{e:mean-em}
    \mean[\tr(K^k_n)]
    =
    \sum_{t=2}^{m+1} \frac{n!}{(n-t)!}
    \sum_{\vf i\in \Gamma_{k,t}}
    \mean\Sbr{x(\vf i)} + O_k(1), \quad
    n\toi.
  \end{align}
  Given $t=2, \ldots, m+1$ and $\vf i\in \Gamma_{k,t}$, for
  $a=1,\ldots,t$, write
  \begin{align*}
    q_a = \sum_{a\in e\in E([\vf i])} \ai_e, \quad
    y_a = \sum_{a\in e\in E_t\setminus E([\vf i])} \ai_e, \quad
    S_a = \sum_{a\in e\in E_n\setminus E_t} \ai_e.
  \end{align*}
  Then $w_a = c+q_a + y_a + S_a$.  Put $y=(\eno y t)$.  Then $y$,
  $\eno S t$, and  $\ai_e$, $e\in E([\vf i])$, are all independent,
  and $x(\vf i)\ne 0\Iff \ai_e=1$ for all $e\in E([\vf i])$.  Since
  $e([\vf i])=t-1$, then
  \begin{align*}
    \mean[x(\vf i)]
    &=
    \mean[x(\vf i)\cf{\ai_e=1 \ \forall e\in E([\vf i])}]
    \\
    &=
    (p/n)^{t-1}
    \mean\Sbr{
      \prod_{a,b=1}^t (c+q_a + y_a + S_a)^{-\cnt((a,b),\vf i)}
      \ \vline\ \ai_e=1\, \forall e\in E([\vf i])
    }.
  \end{align*}
  On the other hand, when $\ai_e=1$ for all $e\in E([\vf i])$,
  $q_a=d(a, [\vf i])$ for all $a=1,\ldots,t$.  Then
  \begin{align*} 
    \mean[x(\vf i)]
    &=
    (p/n)^{t-1}
    \mean\Sbr{
      \prod_{a=1}^t 
      \Grp{
        c+d(a, [\vf i]) + y_a + S_a
      }^{-\sum_{b=1}^t\cnt((a,b),\vf i)}
    }
    \nonumber \\
    &=
    (p/n)^{t-1}
    \mean\Sbr{
      \prod_{a=1}^t (c+d(a, [\vf i])+y_a + S_a)^{-\cnt_+(a,\vf i)}
    }.
  \end{align*}
  Let $n\toi$.  Since $t$ is fixed, $(y, \eno S t)\to (0, \eno \xi
  t)$ in distribution, with $\xi_i$ \iid$\sim \dpois(p)$.  Then by
  \eqref{e:mean-em} and dominated convergence, for $k=2m$,
  \begin{align*}
    \mean[\mnt_k(K_n)]
    =
    n^{-1}  \mean[\tr(K^k_n)]
    \to
    \sum_{t=2}^{m+1} p^{t-1}
    \sum_{\vf i\in \Gamma_{k,t}}
    \mean\Sbr{
      \prod_{a=1}^t (c+d(a, [\vf i])+\xi_a)^{-\cnt_+(a,\vf i)}
    },
  \end{align*}
  finishing the proof.
\end{proof}

\subsubsection*{\bf Acknowledgments}  The author would like to thank
the referees for careful reviews and useful comments, in particular,
their comments on the connection to local convergence and random walk
on a Galton-Watson random tree.

\begin{small}

\end{small}

\end{document}